\documentclass[11pt]{article}
\usepackage[utf8]{inputenc}
\usepackage[T1]{fontenc}
\usepackage{amsmath, amssymb, amsthm}
\usepackage{xcolor}
\usepackage{geometry}
\usepackage{comment}

\usepackage{amsthm, amsfonts, amssymb}
\usepackage{mathrsfs}
\usepackage{amsmath}

\usepackage{natbib}

\newtheorem{theorem}{Theorem}[section]
\newtheorem{proposition}[theorem]{Proposition}
\newtheorem{lemma}[theorem]{Lemma}
\newtheorem{corollary}[theorem]{Corollary}
\newtheorem{remark}[theorem]{Remark}

\newcommand\R{\mathbb{R}}

\begin{document}
\title{Propagation of Regularity for Schr\"odinger Equations with Time Dependent Potentials}

\author{Avy Soffer \thanks{Rutgers University, Mathematics department, 110 Frelinghuysen Road, Piscataway, NJ 08854, US}}

\date{\today}
\maketitle

\begin{abstract}
The dynamics of Schr\"odinger equation with time dependent potentials of general time dependence is considered.
It is shown that for localized in space potentials, there is propagation of regularity which is uniformly bounded in higher Sobolev norms.
Unlike the cases where the solution scatter, and then propagation is proved via a standard bootstrap argument, the solutions considered here have a part that does not scatter, as expected in general. 
For this we introduce propagation estimates that work directly in (e.g.) $H^2(\R^3).$

             We prove Propagation of Regularity for Schr\"odinger Equations with Time Dependent Potentials, namely, the $H^2$ norm of the solution remains uniformly bounded in time. We also prove some general Propagation Estimates that may be of interest.
\end{abstract}

\section{Introduction}
The Schr\"odinger equation with time dependent potential is fundamental; it corresponds to open quantum systems, as well as nonlinear equations. It appears naturally in linearization of non-linear dynamics, in long range N-body scattering, see e.g. \cite{sigal1993asymptotic} and charge transfer hamiltonians \cite{yajima1980multi}. See also \cite{chen2026trapped}.

Yet, it is under-developed at the theory level, and for a good reason: one cannot use the same type of spectral theory (following linear algebra) as in the time independent case. Hence, the well known methods and results are based on special cases which are mostly perturbative, like small perturbations, adiabatic time dependence, time periodic or vanishing as time goes to infinity.

On the mathematical side, we have some well-known important results on the theory of global existence; see, e.g. \cite{yosida2012functional,reed1975ii,kitada1982scattering,kitada1983remarks,ruiz1994local,linares2014introduction}. In particular, it is shown that for localized potentials that we consider below, the solutions exist in $H^2.$ However, these estimates do not bound the size, since the $H^2$ can grow exponentially in time.

In many cases, the relevant dispersive estimates for the solution can be proved, such as the Strichartz estimate or $L^p$ decay estimates. This can then be used to prove the uniform boundedness of higher norms by applying Gronwall's inequalities to derivatives of the equation. This can be done for defocusing Nonlinear wave equations.

For problems with time dependent potentials, it can be done for special types of time dependence\cite{ rodnianski2004time,beceanu2019semilinear,soffer2025local,rodnianski2005dispersive,fishman2014multiscale,teufel2003adiabatic}. Although there are results on scattering theory for time dependent potentials, they do not have the needed dispersive estimates, \cite{kitada1983remarks,enss1983bound}.
For small potentials, the N-body case was treated by \cite{yajima2025boundedness}

In recent years scattering theory was developed to include both time dependent potentials and nonlinear equations with focusing part.
\cite{T2008,liu2020general,liu2025large,soffer2022large,soffer2025ℒ}. They rely on the assumption that the solution is uniformly bounded in $H^1.$ They do not prove decay estimates which are useful for propagation of regularity. However, it should be noted that in the works of Tao, and later in the works of Liu-Soffer and Soffer-Wu, the weakly localized part of the solution is shown to be smooth in some cases.

In contrast with these methods, in this work a direct approach is used to control the higher Sobolev norm, avoiding the proofs of decay estimates.
Instead, we prove Propagation Estimates(PRES) directly in higher norms. Such estimates are typically proven by the use of Propagation Observables (PROB).
PROB is an operator $B(t)$ (or a family of operators depending on time) which is self adjoint and such that its expectation on the state at time $t$, $\langle \psi(t), B(t)\psi(t)\rangle$ has a non-negative time derivative.
There are many such known operators which give useful  apriori estimates on solutions of both linear and non-linear equations.
The first class is composed of operators which correspond to symmetries of the equation, and lead to conservation laws. The constant function implies $L^2$ conservation. Functions of the hamiltonian $H$ in the time independent case (energy conservation) etc...
A second class of operators are derived from the dispersive nature of the solutions of defocusing equations, and include the Morawetz estimate and the Dilation identity.

Modern estimates are now based on microlocalized objects, where only the leading order term of the derivative is non-negative, and there are higher order corrections, which need to be controlled in $L^1(dt).$ We will follow tools from \cite{sigal1986asymptotic, graf1990asymptotic,derezinski2013scattering,HSS,sigal1987n,sigal1988local,liu2025large,soffer2022large,soffer2011monotonic}. See also \cite{soffer2024new} for a  review.

So the typical estimate looks like 
\[ \langle \psi(T), B(T)\psi(T)\rangle -\langle \psi(t), B(t)\psi(t)\rangle=\int_t^T \langle \psi(s), C^*C(s)\psi(s)\rangle ds+
\]
\[
+\int_t^T \langle \psi(s), R(s)\psi(s)\rangle ds. \quad \langle R(s)\rangle \in L^1(ds).
\]
Here we introduce and use the notation $\langle\psi(t), Q(t)\psi(t)\rangle\equiv  \langle Q(t)\rangle. $

The proof of the propagation estimates is done by proving on various parts of the phase-space the propagation of regularity using the above argument and generalizations thereof, including summing over time slices, and summing over dyadic (in spectral space) PROBs.

\section{Smoothness of the Solution on Compact Domains in Space}
We consider the Schr\"odinger Equation (SE)
$$
\begin{aligned}
& i \frac{\partial \psi}{\partial t}=-\Delta \psi+V(x, t) \psi\equiv (H_0+V)\psi, \\
& \psi(0) \in H^{s}(\R^n), \quad s \geqslant 2, \quad n\geq 3.
\end{aligned}
$$
$V(x, t)$ is assumed to be localized in $x$ and smooth:
$$
\begin{aligned}
& \left\|\langle x\rangle^{\sigma} V(x, t)\right\|_{L^{\infty}}< c, \quad \sigma > 4 . \\
& \left\|\langle x\rangle^{\sigma}\left[\left|\frac{\partial V}{\partial t}\right|+\left|\nabla_{x} V\right|+|\Delta_x V|\right]\right\|_{L^{\infty}}< c, \quad \sigma > 4 .
\end{aligned}
$$
Here, we use the standard definition 
$$
\langle x\rangle \equiv 1+|x|^2.
$$
\begin{remark}
These conditions are not optimal, and apply for $s \leq 2$. In particular, the restrictive $L^{\infty}$ conditions are used to streamline the presentation.
\end{remark}

The first step of the proof is to show that the solution is uniformly bounded in $\mathrm{H}^{2}$ (say), locally in space.

\begin{theorem}[Local Smoothness]

 Let $\psi(t)$ be a solution of the (SE) with initial data in $\mathrm{H}^{2}(\R^n), n\geq 3.$

Then,
\[
\sup _{t}\left\|\langle x\rangle^{-2-\delta} \Delta \psi(t)\right\|_{L^{2}} \lesssim\|\psi(0)\|_{H^{2}} .
\]\label{Local1}
\end{theorem}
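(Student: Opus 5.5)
We will prove Theorem~\ref{Local1} with a propagation observable of Morawetz/local-smoothing type, applied not to $\psi$ but to $\Delta\psi$ (equivalently to $H_0\psi$). We avoid any dispersive decay estimate for $\psi$. The idea is that a localized weight times a derivative of $\psi$ is controlled by the time integral of a positive commutator. The errors produced by $V$ are localized, because of the decay $\langle x\rangle^{-\sigma}$ with $\sigma>4$, and so they can be absorbed. The only a priori information we use is $L^2$ conservation, $\|\psi(t)\|_{L^2}=\|\psi(0)\|_{L^2}$.

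\textbf{Step 1: equation for $\phi=\Delta\psi$.} Differentiating (SE), we get
\[
i\partial_t\phi=(H_0+V)\phi+F,\qquad F=2\nabla V\cdot\nabla\psi+(\Delta V)\psi .
\]
By the hypotheses, $\|\langle x\rangle^{\sigma}F\|_{L^2}\lesssim \|\langle x\rangle^{-\sigma/2}\nabla\psi\|+\|\psi\|$. The inhomogeneity is therefore spatially localized and of lower order. We will interpolate $\|\langle x\rangle^{-\sigma/2}\nabla\psi\|$ between $\|\psi\|_{L^2}$ and the weighted norm of $\Delta\psi$ we are trying to bound, i.e.\ $\|\langle x\rangle^{-\sigma/2}\nabla\psi\|\le \epsilon\|\langle x\rangle^{-2-\delta}\Delta\psi\|+C_\epsilon\|\psi\|$ up to commutator terms with weights.

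\textbf{Step 2: PROB at the $H^2$ level.} We take
\[
B(t)=\langle \phi(t),\,\tfrac12(\gamma\cdot\nabla g+\nabla g\cdot\gamma)\,\phi(t)\rangle ,\qquad \gamma=-i\nabla,
\]
with $g$ a bounded Morawetz-type weight, e.g.\ $\nabla g=x/\langle x\rangle^{1/2}$, suitably regularized. We also add the energy-type term $\langle\phi,\langle x\rangle^{-2-\delta}\phi\rangle$. Then $B(t)$ is bounded by $\|\phi\|_{\dot H^{1/2}}\|\phi\|$-type quantities, which must be controlled.

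The time derivative consists of three parts. The main part is $i[H_0,A]\ge c\,\gamma\langle x\rangle^{-1-\delta}\gamma+c\langle x\rangle^{-3-\delta}$; this is where $n\ge3$ enters, through Hardy's inequality and positivity of $-\Delta^2 g$. The other two parts are the potential term $\langle\phi,\nabla g\cdot\nabla V\,\phi\rangle$ and the source term $2\,\mathrm{Im}\langle F,A\phi\rangle$. Integrating in time gives the standard identity displayed in the Introduction, with $C^*C$ equal to the local smoothing density of $\phi$.

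\textbf{Step 3: closing.} We bound $\int_t^{t+1}\|\langle x\rangle^{-(1+\delta)/2}\nabla\phi\|^2\,ds$ on unit time slices. On each slice we use the identity, the localized error terms (absorbed using $\sigma>4$ and the smallness gained from Step 1's interpolation at high frequencies), and a Duhamel/Gronwall argument on the slice. Low frequencies are handled by $L^2$ conservation, since $\|P_{\le N}\phi\|\lesssim N^2\|\psi\|$. It therefore suffices to treat the high-frequency part $P_{>N}\phi$, for which $V$ is effectively a small perturbation relative to the kinetic term. The local-in-time $H^2$ existence theory quoted in the introduction converts the slice-averaged local smoothing bound into a pointwise-in-$t$ bound of $\|\langle x\rangle^{-2-\delta}\Delta\psi(t)\|$.

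\textbf{Main obstacle.} The hard step is the potential term $\langle\phi,\nabla g\cdot\nabla V\phi\rangle$. It is not of lower order, is not sign-definite, and is not integrable in time a priori, since $\phi$ may have a non-scattering localized part. It is exactly this term that allows exponential growth. The first thing to try is a dyadic decomposition in frequency, with a PROB $B_j$ for each dyadic shell $|\gamma|\sim 2^j$. On shell $j$, the positive commutator gives $2^{j}\|\langle x\rangle^{-(1+\delta)/2}\phi_j\|^2$, while the potential error is $O(1)\|\langle x\rangle^{-\sigma/2}\phi_j\|^2$. This error is absorbed for $2^j\gg\|V\|$. The finitely many low shells are controlled by $L^2$ conservation and the frequency bound on $\psi$. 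Summing the shells then gives the uniform bound.
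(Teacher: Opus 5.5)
Your argument has a genuine gap, and it is not the one you flag: the boundary term of your propagation observable is not controlled. As written, $B(t)=\langle\Delta\psi,\tfrac12(\gamma\cdot\nabla g+\nabla g\cdot\gamma)\Delta\psi\rangle$ is of size $\|\Delta\psi(t)\|_{L^2}\|\nabla\Delta\psi(t)\|_{L^2}$. That need not be finite even at $t=0$ for $H^2$ data. If you normalize shell by shell (multiplier $\sim A/|\gamma|$ on $|\gamma|\sim 2^j$), then $\sum_{2^j>N}B_j(t)$ can only be bounded by $\|P_{>N}\Delta\psi(t)\|_{L^2}^2$. That is the \emph{global} $H^2$ norm, which is not conserved and a priori may grow exponentially; the paper controls it only in later sections, using this theorem as input. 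A positive-commutator identity gives $\int_t^{t+1}(\text{local smoothing})\,ds\le|B(t+1)|+|B(t)|+\text{errors}$, i.e.\ time-averaged local norms bounded by the supremum of a global norm. Without an independent bound on $\sup_t|B(t)|$ this yields nothing. Chaining unit slices by Gronwall reproduces the exponential growth you are trying to exclude, and so does the local $H^2$ well-posedness theory, which also only propagates global norms. The potential term you call the main obstacle is harmless at high frequency, as your own shell computation shows. The real obstruction is structural: an $L^2_t$-based smoothing argument needs either a conserved global norm at the $H^2$ level or $L^2_t$ decay of the localized source $V\psi$. Neither is available when $\psi$ has a non-scattering part.

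The missing idea is to use \emph{$L^1$-in-time} weighted decay of the free flow, so the localized source only needs to be bounded in $L^\infty_tL^2_x$, which $L^2$ conservation supplies. The paper writes $\Delta\psi(t)$ by Duhamel with respect to $e^{i\Delta t}$ and splits $\int_0^t=\int_0^{t-\varepsilon}+\int_{t-\varepsilon}^t$. For $\tau=t-s\ge\varepsilon$, the norm of $\langle x\rangle^{-\sigma}\Delta e^{i\Delta\tau}\langle x\rangle^{-\sigma}$ is integrable in $\tau$. At high frequency this comes from smoothing via $\langle x-2p\tau\rangle^{-2}$; at low frequency it comes from the $|\tau|^{-n/2}$ dispersive bound, which is where the paper uses $n\ge3$. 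That piece is therefore $\lesssim C_\varepsilon\sup_s\|\psi(s)\|_{L^2}$. On $[t-\varepsilon,t]$ the paper integrates by parts in $s$ using $\Delta e^{i\Delta(t-s)}=i\partial_s e^{i\Delta(t-s)}$. The only second-order term left is $\int_{t-\varepsilon}^t e^{i\Delta(t-s)}V\Delta\psi(s)\,ds$, bounded by $\varepsilon\|\langle x\rangle^{\sigma}V\|_{L^\infty}\sup_{s\le T}\|\langle x\rangle^{-\sigma}\Delta\psi(s)\|_{L^2}$. For small $\varepsilon$ this is absorbed into the left side, the supremum over $[0,T]$ being finite by the local theory. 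The result is a $T$-independent bound that never uses a global norm above $L^2$.
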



\begin{proof}
We use the Duhamel representation.
$$
\langle x\rangle^{-\sigma} \Delta \psi(t)=\langle x\rangle^{-\sigma} e^{-i H_{0} t} \Delta \psi(0)-i \int_{0}^{t}\langle x\rangle^{-\sigma} \Delta e^{-i H_{0}(t-s)} V\left(x, s\right) \psi(s) d s
$$

The first term is bounded in $H^{2}$, since $\psi(0) \in H^{2}$ by assumption.
The Duhamel term, we only need to control it in $L^{2}$ for $p\left(\equiv-i \nabla_{x}\right)$ large.
Split the integral of time into two parts $\int_{t-\varepsilon}^{t}$ and $\int_{0}^{t-\varepsilon}$.
The part $[t- \varepsilon, t]$ is controlled by Bootstrap. The rest is controlled by the smoothing property of the free flow.

\begin{equation}\label{Duhamel}
\begin{aligned}
& \langle x\rangle^{-\sigma} \int_{0}^{t-\varepsilon} \Delta e^{i \Delta(t-s)} V(x, s) \psi(s) d s \\
& =\int_{0}^{t-\varepsilon}\langle x\rangle^{-\sigma} \Delta\langle x-2 p t\rangle^{-2} e^{i \Delta(t-s)}\langle x\rangle^{2} V(x, s) \psi(s) d s \\
& \tau \equiv t-s
\end{aligned}
\end{equation}
 Next, we need
\begin{proposition}[Local Decay and Smoothing]

For $\sigma =2,$
\begin{equation}\label{LD1}
\|\left < x\right >^{-\sigma} \langle x-2 p \tau\rangle^{-2}\| \leq \mathcal{O}( \left < x\right >^2+4p^2\tau^2)^{-1}.
\end{equation}
\end{proposition}
\begin{proof}

The proof follows an algebraic calculation:
$$
\begin{gathered}
\langle x\rangle^{-2}(-\Delta)\langle x-2 p \tau\rangle^{-2}=\langle x\rangle^{-2}\left(-\Delta+x^{2} / 4 \tau^{2}-x^{2} / 4 \tau^{2}\right)\langle x-2 p \tau\rangle^{-2} \\
=\langle x\rangle^{-2}\left(-\Delta+x^{2} / 4 \tau^{2}+1\right)\left\langle-\Delta 4 \tau^{2}+x^{2}+1\right\rangle^{-1}\left[\left(1+x^{2}-4 \Delta \tau^{2}\right)\langle x-2 p \tau)^{-2}\right] \\
+\left(O\left(\langle x\rangle^{-2}\right)+O\left(\tau^{-2}\right)\right)\langle x-2 p \tau\rangle^{-2} 
\end{gathered}$$
$$
\begin{gathered}
\langle x\rangle^{-2}\left(1+x^{2}-4 \Delta \tau^{2}\right)\langle x-2 p \tau\rangle^{-2}=\langle x \rangle^{-2}\left(1+x^{2}-4 \tau^{2} \Delta\right)\left[\frac{-1}{1+x^{2}-4 \Delta \tau^ 2}+\frac{1}{\langle x-2 p\tau\rangle ^{2}}\right]+ \\
+\langle x\rangle^{-2}=\langle x\rangle^{-2} \\
+\langle x\rangle^{-2}\left(1+x^{2}-4 \tau^{2} \Delta\right) \frac{1}{1+x^{2}-4 \Delta \tau^2} 2 A \tau \frac{1}{\langle x-2 p \tau\rangle^{2}} \\
=\langle x\rangle^{-2}+\langle x\rangle^{-2} 2 A \tau \frac{1}{\langle x-2 p \tau\rangle^{2}}.\\
A \equiv \frac{1}{2} x \cdot p+\frac{1}{2} p \cdot x=x \cdot p+c
\end{gathered}
$$

To control the $A$ term, we use (for $\tau <1$)
$$
\begin{aligned}
2 A \tau & =2 \tau(x \cdot p+c)= \\
& =4 \tau^{2}\left(c / \tau+\left(\frac{x}{2 \tau} \frac{1}{\delta}\right) \cdot(\delta p)\right) \\
& \leqslant c \tau+ \tau^{2}\left(p^{2} \delta^{2}+\frac{1}{\delta^{2}} \frac{x^{2}}{4 \tau^{2}}\right) \\
& =c \tau+4 \tau^{2}(-\Delta) \delta^{2}+\frac{1}{\delta^{2}} x^{2} \\
& =c \tau+4 \tau^{2}(-\Delta) \delta^{2}+\delta^{2} x^{2}+\left(\frac{1}{\delta^{2}}-1\right) x^{2}
\end{aligned}
$$

Therefore,
$$
\begin{aligned}
& \langle x\rangle^{-2} 2 A \tau\langle x-2 p \tau\rangle^{-2}= \\
& =O(\tau\rangle\langle x\rangle^{-2}+\delta^{2}\langle x\rangle^{-2}\left(-4 \tau^{2} \Delta+x^{2}\right)\langle x-2 p \tau\rangle^{-2} \\
& +\left(\frac{1}{\delta^{2}}-1\right) O(1)\langle x-2 p \tau\rangle^{-2}
\end{aligned}
$$

By choosing $\delta$ sufficiently small, the $\delta^{2}$ term is absorbed by the LHS.
For $\tau>1$, we use the same estimate on
$$
\left(\frac{x}{2 \tau} \cdot \frac{1}{\delta}\right) \cdot(\delta p)+(\delta p) \cdot\left(\frac{x}{2 \tau} \frac{1}{\delta}\right) \text {, which }
$$
removes the term $O(\tau)\langle x\rangle^{-2}$.
\end{proof}
With the above proposition, we proceed to control the first time interval of \eqref{Duhamel}.

We need to show that the integral over $[0,+t-\varepsilon]$ is uniformly bounded in $L^{2}$.
In the control of the $A$ term, we lose a factor of $\langle x\rangle^{-2}$.
Therefore, to get the integral bound, we need to use an extra factor of $\langle x\rangle^{-1-\delta}$ :
$\left\|\langle x\rangle^{-1-\delta} e^{i \Delta(t-s)} F_{2}(p)\langle x\rangle^{-1-\delta}\right\|_{L^{2} \rightarrow L^{2}} \leq c\langle t-s\rangle^{-1-\delta}.$

The above estimates show that for $|p| \geqslant 1$ the localized solution is uniformly bounded in $H^{2}$.
It remains to control the integral part of $[0, t-\varepsilon]$ with low frequency and control
the Duhamel term on $[t-\varepsilon, t]$.

{\bf Low Frequency}

For low frequency, we use the standard estimate $L^{\infty}$ (in three or more dimensions) and $\sigma >n/2:$
$$
\begin{aligned}
& \left\|\langle x\rangle^{-\sigma} e^{i \Delta(t-s)} f\right\|_{L^{2}} \leqslant\left\|\langle x\rangle^{-\sigma}\right\|_{L^{2}}\left\|e^{i \Delta(t-s)} f\right\|_{L^{\infty}} \\
& \leqslant\left\|\langle x\rangle^{-\sigma}\right\|_{L^{2}}|t-s|^{-n / 2}\|f\|_{L^{1}} .
\end{aligned}
$$

For $\sigma>\frac{n}{2}$ the first term is bounded.
For $n \geqslant 3$, the integral over $s$ is bounded.

In our case $f \equiv V(x, s) \psi(s) \in L^{1}$
for $V(x, s) \in L_{s}^{\infty} L_{x}^{2}$.
\newline
\newline

{\bf The interval $[t-\varepsilon, t]$.}

We want to control the expression
$$
\langle x\rangle^{-\sigma} \int_{t-\varepsilon}^{t} e^{i \Delta(t-s)} \Delta V(x, s) \psi(s) d s .
$$
Integration by parts gives:
$$
\begin{gathered}
\int_{t-\varepsilon}^{t} \Delta e^{i \Delta(t-s)} V(s) \psi(s) d s=\int_{t-\varepsilon}^{t} i \partial_{s}\left(e^{i \Delta(t-s)} V \psi\right) d s \\
-i \int_{t-\varepsilon}^{t} e^{i \Delta(t-s)}\left(\frac{\partial V}{\partial s} \psi(s)+V(-i) i \frac{\partial \psi}{\partial s}\right) d s \\
=i V(t) \psi(t)-i e^{i \Delta \varepsilon} V(t-\varepsilon) \psi(t-\varepsilon) \\
-i \int_{t-\varepsilon}^{t} e^{i \Delta(t-s)} \frac{\partial V}{\partial s} \psi(s) d s \\
-i \int_{t-\varepsilon}^{t} e^{i \Delta(t-s)} V(-i \Delta-i V) \psi(s) d s .
\end{gathered}
$$

The first two terms are uniformly bounded in $L_{x}^{2}$. The third term times $\langle x\rangle^{-\sigma}$ gives
$$
\begin{aligned}
& \int_{t-\varepsilon}^{t}\left\|\langle x\rangle^{-\sigma} e^{i \Delta(t-s)} \frac{\partial V}{\partial s} \psi(s)\right\|_{L^{2}} d s \\
& \leqslant \int_{t-\varepsilon}^{t}\|\langle x\rangle^{-\sigma}\|_{L^{\infty}}
|\frac{\partial V}{\partial s} \psi(s)\|_{L_{s}^{\infty} L_{x}^{2}} d s, \\
& \leq \int_{t-\varepsilon}^{t}\left\| \frac{\partial V}{\partial s} \psi(s)\right\|_{L_{s}^{\infty} L_{x}^{2}} d s<c \epsilon \left\| \frac{\partial V}{\partial s}\right \|_{\infty} \|\psi\|_2 . \\
\end{aligned}
$$

The estimate of the last term is similar, where we have $V^{2} \psi(s)$ replacing $\frac{\partial V}{\partial s}$, except that the Laplacian term is bootstrapped:

$$
\begin{aligned}
& \left\|\langle x\rangle^{-\sigma} \int_{t-\varepsilon}^{t} e^{i H(t-s)} V \Delta \psi(s) d s\right\|_{L^{2}_x} \\
& \quad \leqslant c \varepsilon \|\langle x\rangle^{\sigma}V\|_{L^{\infty}}\left\|\langle x\rangle^{-\sigma} \Delta \psi(s)\right\|_{L_{s}^{\infty}(t-\varepsilon, t) L_{x}^{2}},
\end{aligned}
$$
 Hence we need $\left\|V\langle x\rangle^{\sigma}\right\|_{L^{\infty}_x}<c$.
\end{proof}

\section{Control for Large x}

In this section, we upgrade control of derivatives for large $x$.

\begin{proposition}

Under the previous assumptions and Local Smoothness Theorem, we have that
$$
\|p \psi(t)\|_{L^{2}} \leqslant t^{a}, \quad t \gg 1
$$
$1 / 5=a \quad$ for $|V| \approx\langle x\rangle^{-3-\varepsilon}$ and $|\nabla V|+\left|\partial_{t} V\right| \leqslant\langle x\rangle^{-6-\varepsilon}$.
\end{proposition}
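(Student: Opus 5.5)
We work with the energy-type quantity $E(t)=\langle \psi(t), H_0\psi(t)\rangle=\|p\psi(t)\|_{L^2}^2$. Differentiating along the flow of (SE) gives
\[
\frac{d}{dt}\|p\psi(t)\|^2 = i\langle \psi,[H_0+V,H_0]\psi\rangle = -i\langle\psi,[H_0,V]\psi\rangle = -\langle \psi, (\nabla V\cdot p + p\cdot\nabla V)\psi\rangle .
\]
So the growth of $\|p\psi\|$ is governed by a term localized in space by $\nabla V$. Alternatively, we can use $\langle \psi, (H_0+V(t))\psi\rangle$, whose derivative is $\langle\psi,\partial_tV\psi\rangle$, which is harmless since $\partial_t V$ is bounded. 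The plan is to combine both descriptions. For this we split
\[
\|p\psi\|^2=\langle H(t)\rangle-\langle V\rangle,
\]
with $|\langle V\rangle|\le \|V\|_\infty\|\psi\|^2$ bounded. Then
\[
\langle H(t)\rangle-\langle H(0)\rangle=\int_0^t\langle\psi(s),\partial_sV\,\psi(s)\rangle\,ds .
\]
This alone only gives linear growth $\|p\psi\|^2\lesssim t$. The point is to exploit spatial decay of $\partial_sV$ together with the Local Smoothness Theorem and local decay, so as to do better.

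The key step is to estimate $\langle\psi(s),\partial_sV\psi(s)\rangle$ (and $\langle \psi,\nabla V\cdot p\,\psi\rangle$) using a splitting of $\psi$ into a part localized near the origin and a part moving outward. Write $\psi=\chi(|x|<R)\psi+\chi(|x|\ge R)\psi$ with $R=R(t)$ a growing scale, e.g.\ $R=t^{\beta}$. For $|x|\ge R$ the weight gives
\[
|\partial_sV|+|\nabla V|\lesssim R^{-6-\varepsilon},
\]
and this is multiplied by at most $\|\psi\|\,\|p\psi\|$. For $|x|<R$ we use the Local Smoothness Theorem, which bounds $\|\langle x\rangle^{-2-\delta}\Delta\psi\|$, hence $\|\langle x\rangle^{-2-\delta}p\psi\|$ by interpolation with $L^2$ conservation. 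On this region $\langle x\rangle^{2+\delta}\lesssim R^{4+2\delta}$ (recall $\langle x\rangle=1+|x|^2$). The cross terms contribute $\|\langle x\rangle^{\sigma}\nabla V\|_\infty$ times quantities uniformly bounded by the theorem, provided $\sigma$ beats the weight $2+\delta$. Our hypothesis $|\nabla V|\le\langle x\rangle^{-6-\varepsilon}$ is where this is used.

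This yields a differential inequality of the form
\[
\frac{d}{dt}\|p\psi\|^2\lesssim C + R^{-k}\|p\psi\| + (\text{bounded local terms}),
\]
combined with a near-boundary region where the cutoff derivative $[H_0,\chi_R]\sim R^{-1}p$ appears, costing $R^{-1}\|p\psi\|^2$. We then choose $R(t)$ to balance these terms and solve the resulting Gronwall-type inequality for $X(t)=\|p\psi(t)\|^2$: roughly $X'\lesssim t^{-\gamma}X+ t^{\mu}X^{1/2}+C$. Optimizing the exponent $\beta$ against the decay rates $3+\varepsilon$ of $V$ and $6+\varepsilon$ of $\nabla V,\partial_tV$ should produce $X\lesssim t^{2/5}$, i.e.\ $\|p\psi\|\lesssim t^{1/5}$. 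The precise value of $a$ is an output of this bookkeeping.

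The main obstacle is the commutator/cutoff term $R^{-1}\|p\psi\|^2$. It is not integrable unless $R$ grows, and if $R$ grows too fast the interior weight $R^{4+2\delta}$ from the Local Smoothness Theorem destroys the gain. My first attempt would be to replace the sharp cutoff by the propagation observable $\chi(|x|/t^{\beta})$. Its Heisenberg derivative has a sign up to $O(t^{-\beta}\|p\psi\|)$ errors, as in the PROB framework of the introduction. I would then check whether the leading term can be dropped by positivity, leaving only errors that the choice $\beta$ makes compatible with the claimed exponent $a=1/5$.
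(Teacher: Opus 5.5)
There is a genuine gap. Nothing in your scheme makes the source term in the energy identity decay in time, so it cannot improve on $\|p\psi(t)\|^2\lesssim t$. By the Local Smoothness Theorem, the full term $\langle\psi,(\nabla V\cdot p+p\cdot\nabla V)\psi\rangle$, or $\langle\psi,\partial_sV\,\psi\rangle$, is already $O(1)$ uniformly in $s$ without any splitting. The difficulty is that it is \emph{only} $O(1)$: the part of the solution that stays near the origin need not scatter, and it can keep exchanging energy with $V(\cdot,s)$ at unit rate.

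A spatial splitting at scale $R=t^{\beta}$ improves only the exterior piece. The interior piece $|x|<R$ contains exactly this localized component and produces the constant $C$ in your inequality $X'\lesssim C+\dots$, which integrates to $X\lesssim t$ for every choice of $\beta$. So the cutoff commutator $R^{-1}\|p\psi\|^2$ is not the main obstacle, and replacing $\chi_R$ by $\chi(|x|/t^{\beta})$ does not help. Your assertion that the bookkeeping ``should produce'' $t^{2/5}$ has no mechanism behind it.

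The missing idea is to localize in momentum rather than in position. The paper uses the observable $h(t)=F_2(|p|/t^{\alpha}\ge 1)H(t)+H(t)F_2(|p|/t^{\alpha}\ge 1)$, and the complementary low-frequency part contributes at most $t^{2\alpha}$ to $\|p\psi\|^2$ trivially. In $\partial_t\langle h\rangle$, the source terms $F_2\,\partial_tV$ and $i[V,F_2]H$ are spatially localized. So one writes $F_2=F_2(p^2+1)^{-1}(p^2+1)$ and gains $t^{-2\alpha}$ from each factor $F_2(p^2+1)^{-1}$. The factor $(p^2+1)$ is absorbed by $\|\langle x\rangle^{-\sigma}\Delta\psi\|$ from local smoothness, after commuting weights through the cutoff; this is where the decay of $\nabla V$ and $\partial_tV$ enters.

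This yields $\partial_t\langle h\rangle\lesssim t^{-3\alpha}$ (the time derivative of the moving cutoff is $O(t^{-1+2\alpha})$ and integrates to the same $t^{2\alpha}$). Hence $\langle h\rangle\lesssim t^{1-3\alpha}$, and balancing $t^{2\alpha}=t^{1-3\alpha}$ gives $\alpha=1/5$, i.e.\ $\|p\psi\|\lesssim t^{1/5}$. The exponent comes from a frequency threshold $t^{1/5}$, not from a spatial scale. Restricting to high frequencies is what turns the uniform local $H^2$ bound into time decay of the source; your position-space decomposition never separates frequencies, so the non-decaying localized part always remains in your source term.
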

\begin{proof}
Let $F_2(\lambda \geq 1)$ be a smooth characteristic function of the interval $[1,\infty).$
We use the following PROB:
$$
\begin{gathered}
h(t)=F_{2}\left(\frac{|p|}{t^{\alpha}} \geqslant 1\right) H(t)+H(t) F_{2}\left(\frac{|p|}{t^{\alpha}} \geqslant 1\right) \\
H(t)=p^{2}+V(x, t) .
\end{gathered}
$$

Then,
$$
\begin{aligned}
\partial_{t} h(t)= & \left\langle i\left[V, F_{2}\right] H+H i\left[V, F_{2}\right]\right\rangle \\
& +\left\langle F_{2} \frac{\partial V}{\partial t}+\frac{\partial V}{\partial t} F_{2}\right\rangle \\
& +\left\langle-\frac{\alpha}{t} \tilde{F}_{2}^{\prime} H+H\left(-\frac{\alpha}{t}\right) \tilde{F}_{2}^{\prime}\right\rangle \equiv I_{1}+I_{2}+I_{3}
\end{aligned}
$$
$I_{2}$ has a localization as a result of the decay in $x$ of $\frac{\partial V}{\partial t}$.

Hence
$$
\begin{aligned}
I_{2}= & \left\langle F_{2}(\Delta+1)^{-1}(-\Delta+1){\frac{\partial V}{\partial t}}\right\rangle+c \cdot c \\
& t^{-2 \alpha}\left\langle\Delta \psi, \tilde{F}_{2} \frac{\partial V}{\partial t}\psi\right\rangle+c \cdot c \cdot= \\
= & t^{-2 \alpha}\left\langle\Delta \psi,\langle x\rangle^{-\sigma}\langle x\rangle^{\sigma} \tilde{F}_{2} \frac{\partial V}{\partial t} \psi\right\rangle+c \cdot c . \\
\leq & c t^{-2 \alpha}\left\|\langle x\rangle^{-\sigma} \Delta \psi\right\|_{2}\left\|\langle x\rangle^{\sigma} \frac{\partial V}{\partial t}\right\|_{L_{t, x}^{\infty}}\|\psi\|_{2}. \\
I_{3}= & -\frac{\alpha \cdot 2}{t}\left\langle\tilde{F}_{2}^{\prime} p^{2}\right\rangle+\left\langle-\frac{\alpha}{t}\left(\tilde{F}_{2}^{\prime} V+V \tilde{F}_{2}^{\prime}\right)\right\rangle \\
\leq & O\left(t^{-1+2 \alpha}\right)+O(t^{-1})
\end{aligned}
$$

We used that
$\langle x\rangle^{\sigma} \tilde{F}_{2} \frac{\partial V}{\partial t}$ is higher order in $x,t$, since by commuting powers of $x$ through $\tilde{F}_{2}^{\prime}$ we gain a power of $x$ and a power of $t^{-\alpha}$.

It remains to estimate $I_{1}$.
First, we note that the leading order of the commutator $i\left[V, F_{2}\right]$ is given by
$$
\frac{\partial V}{\partial x} \cdot \frac{\partial F_{2}}{\partial p}=\left(\frac{\partial V}{\partial x} \cdot  \tilde F_{2}\right) t^{-\alpha}; \quad \tilde F_2\equiv \tilde F_2(\frac{|p|}{t^{\alpha}}=1).
$$

Hence, as before, we can estimate
$$
\frac{\partial V}{\partial x} \cdot \frac{\partial F_{2}}{\partial p}  p^{2} \sim t^{\alpha}(-\Delta+i)(-\Delta+i)^{-1} \frac{\partial V}{\partial x} \cdot \tilde F_{2}(-\Delta+i)^{-1}(-\Delta+i)
$$

Assuming we have enough decay in $x$ for $\frac{\partial V}{\partial x}$, we gain from each side $t^{-2 \alpha}$.
So, all together, such terms decay like
$$
t^{-\alpha} \cdot t^{2 \alpha} t^{-4 \alpha}=t^{-3 \alpha}
$$
where $t^{-\alpha}$ comes from the commutator, $t^{2 \alpha}$ comes from $p^{2} \tilde{F}_{2}^{\prime}$,
$t^{-4 \alpha}$ comes from each factor of $(-\Delta+c)^{-1} \tilde{F}_{2}^{\prime}$.
For this we need that $\frac{\partial V}{\partial x}$ decays at least like $\langle x\rangle^{-4-\varepsilon}$.
Using a similar improvement of the estimates of $I_{2}$ and $I_{3}$, we conclude that:

$\partial h(t) \sim t^{-3 \alpha} c\left(\left\|\psi_{0}\right\|_{H^1}\right)$.
Integrating both sides, and remembering that
$2\langle H(t)\rangle \sim \langle h(t)\rangle +O\left(t^{+2 \alpha}\right)$, we choose $\alpha$ such that
$$
t^{2 \alpha} \sim \int^{t} s^{-3 \alpha} d s \sim t^{-3 \alpha+1}
$$

This gives $2 \alpha=1-3 \alpha$ or $\alpha=1 / 5$.

Consequently, we proved under the assumption of $\langle x\rangle^{-6-\epsilon}$ decay for the derivatives of $V$, that
$$
\left\langle p^{2}\right\rangle_{t} \leqslant t^{2 / 5}
$$
\end{proof}
\section { Incoming and outgoing Waves}
We begin with the proof, of general interest, that the incoming part of the solution is smooth (as $\mathrm{H}^{2}$ in our case).
Let as before, $A=\frac{1}{2}(x \cdot p+p \cdot x)$ and $M, R \gg 1$. 

Then we define by the spectral theorem the following operators \cite{soffer2011monotonic}:
$$
\begin{aligned}
& P^{+}(A)=P_{M, R}^{+}=\frac{1}{2}\left(I+\tanh \left(\frac{A-M}{R}\right)\right) \\
& P^{-}(A)=\frac{1}{2}\left(I-\tanh \left(\frac{A+M}{R}\right)\right) \\
& P^{0}(A)=I-P^{-}(A)-P^{+}(A)=\frac{1}{2}\left(\tanh \left(\frac{A+M}{R}\right)-\tanh \left(\frac{A-M}{R}\right)\right)
\end{aligned}
$$

\begin{proposition} [Local Decay-Incoming Waves]

\begin{equation}\label{LD2}
\left\|P^{-}(A) e^{i \Delta \tau} \chi(|p|=K) f(x)\right\|_{L^{2}} \leq\langle K \tau\rangle^{-2}\left\|\chi\langle x\rangle^{\sigma} f\right\|_{H^{2}}, \text{with} \,\, \tau>0,  K \tau \gg 1 .\quad  \sigma>2.
\end{equation}
\end{proposition}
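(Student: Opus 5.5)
The plan is to use the fact that the free flow moves the dilation generator $A$ forward in time: incoming data (negative $A$) becomes outgoing, so its projection onto $A<-M$ must decay. Two facts drive this. First,
\[
e^{-i\Delta\tau}\,A\,e^{i\Delta\tau}=A+2\tau p^{2},
\]
which follows from $i[-\Delta,A]=2p^2$ (up to sign conventions fixed by the direction of the flow $e^{i\Delta\tau}$). Second, $\chi(|p|=K)$ localizes $p^2\approx K^2$. Writing $g=\chi(|p|=K)f$ and conjugating, we get
\[
\|P^-(A)e^{i\Delta\tau}g\|=\|P^-(A+2\tau p^2)g\| .
\]
On the support of $\chi$, $A+2\tau p^2\approx A+2\tau K^2$. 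The function $P^-(\lambda)=\tfrac12(1-\tanh((\lambda+M)/R))$ decays exponentially for $\lambda\gg -M$, so $P^-(A+2\tau K^2)$ is small except where $A\lesssim -2\tau K^2$. The weight $\langle x\rangle^{\sigma}$ on $f$ should control that region.

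Concretely:

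\textbf{Step 1 (quadratic bound on $P^-$).} Using the exponential decay of $1-\tanh$, bound
\[
P^-(\lambda)\le C_{M,R}\langle \lambda_+\rangle^{-2}\quad\text{for }\lambda\ge 0 .
\]
Then split $g$ according to the spectral decomposition of $A$ into $A\ge -\tau K^2$ and $A<-\tau K^2$.

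\textbf{Step 2 (the region $A\ge -\tau K^2$).} Here $A+2\tau p^2\gtrsim \tau K^2$, and the contribution is bounded by $\langle \tau K^2\rangle^{-2}\|g\|$. This is at least as good as $\langle K\tau\rangle^{-2}$ when $K\gtrsim 1$. For $K\lesssim 1$ I would instead use the variant $A+2\tau p^2 = \tfrac12\big((x+2p\tau)\cdot p+p\cdot(x+2p\tau)\big)$. One can avoid operator-valued functional calculus of the noncommuting pair by working with the $\tanh$ resolvent representation, $\tanh(s)=\sum$ of simple poles, so that $P^-$ becomes a sum of resolvents $(A+2\tau p^2-z)^{-1}$. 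For each resolvent apply the second resolvent identity twice, exploiting $[A,p^2]=2ip^2$ to keep commutator terms bounded and of lower order.

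\textbf{Step 3 (the region $A<-\tau K^2$).} Estimate $\|\mathbf 1_{A<-\tau K^2}g\|\le (\tau K^2)^{-2}\|A^2 g\|$. Since $g$ is localized at $|p|\approx K$, we have $\|A^2 g\|\lesssim K^2\|\langle x\rangle^{2}\chi f\|$ plus lower order terms. This yields $\langle K\tau\rangle^{-2}\|\chi\langle x\rangle^{\sigma}f\|_{H^2}$ once $\sigma>2$. The $H^2$ norm absorbs the derivatives falling on $f$ when $A^2$ is expanded: $A^2$ contains $x^2p^2$, which explains both the $H^2$ norm and the restriction $\sigma>2$. The commutation of $\langle x\rangle^{2}$ through $\chi(p)$ is handled as in Proposition \eqref{LD1}.

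\textbf{Main obstacle.} The hard step is Step 2, making rigorous the replacement of $P^-(A+2\tau p^2)$ by $P^-(A+2\tau K^2)$, because $A$ and $p^2$ do not commute. I would first try the Mourre-type positivity argument: $A+2\tau p^2\ge A+2\tau(1-\epsilon)K^2$ on $\operatorname{Ran}\chi$. Then use operator monotonicity of the decreasing function $P^-$ in the form of a propagation estimate. Differentiate $\langle P^-(A)\rangle$ along the free flow; by $i[H_0,A]=2p^2\approx 2K^2$ its derivative is $\le -cK^2 R^{-1}\langle (P^-)'\rangle$. Iterate this twice (a Sigal--Soffer style argument with the $\tanh$ cutoffs of \cite{soffer2011monotonic}) to produce the $\langle K\tau\rangle^{-2}$ rate, with remainders controlled by the weights $\langle x\rangle^{\sigma}$.
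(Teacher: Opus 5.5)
Your conjugation identity $\|P^-(A)e^{i\Delta\tau}g\|=\|P^-(A+2\tau p^2)g\|$ is correct, and so is your Step 3 (Chebyshev with $\|A^2g\|$). Step 2, however, is not merely the hard step: as formulated it is false, so the split itself has to be abandoned.

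A spectral cutoff in $A$ destroys the localization $|p|\approx K$. So on $\mathbf 1_{A\ge-\tau K^2}g$ you cannot use $A\ge-\tau K^2$ and $p^2\approx K^2$ at the same time. This is transparent in the spectral representation of $A$. In polar momentum variables $p=e^{s}\omega$, set $G(s,\omega)=e^{ns/2}\hat g(e^{s}\omega)$. Then $A=i\partial_s$, $p^2=e^{2s}$, and $e^{i\Delta\tau}$ is multiplication by $e^{-i\tau e^{2s}}$. The sharp cutoff $\mathbf 1_{A\ge-\tau K^2}$ has kernel in $s$ equal to $\tfrac12\delta(s-s')$ plus a modulated Hilbert-transform kernel $\sim(s-s')^{-1}$.

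Take $G=\eta(s-\ln K)e^{i\tau K^2 s}$, with $\eta$ a fixed bump supported in $[-1,1]$. Then $\hat g$ lives in $\{K/e\le|p|\le Ke\}$, and the $A$-spectrum of $g$ lies within $O(1)$ of $-\tau K^2$.
\begin{itemize}
\item The vector $\mathbf 1_{A\ge-\tau K^2}g$ has a tail of amplitude $\approx|\int\eta|/(2\pi|s-\ln K|)$ on $\{|p|<K/e\}$, and its norm is a fixed fraction of $\|g\|$.
\item On that region $2\tau p^2<2e^{-2}\tau K^2$, so the local value of $A+2\tau p^2$ on this piece is below $-\tfrac12\tau K^2$, and $P^-$ acts on it essentially as the identity.
\item A stationary-phase computation gives $\|P^-(A)e^{i\Delta\tau}\mathbf 1_{A\ge-\tau K^2}g\|\ge c\|g\|$, uniformly in $\tau$.
\end{itemize}
Since $g$ itself vanishes on $\{|p|<K/e\}$, the Step 3 piece carries the opposite tail. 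These tails are an artifact of the split, and no bound of the Step 2 piece by $\|g\|$ alone can hold. In particular, the form inequality $A+2\tau p^2\ge A+2\tau(1-\epsilon)K^2$ on $\operatorname{Ran}\chi$ cannot be combined with a cutoff in $A$.

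Neither proposed repair closes this gap.
\begin{itemize}
\item Replacing $P^-(A+2\tau p^2)$ by $P^-(A+2\tau K^2)$ is wrong already at the symbol level, unless the annulus has relative width $\ll R/(\tau K^2)$. In the second resolvent identity the perturbation $2\tau(p^2-K^2)$ has size comparable to $\tau K^2$, while the resolvents are only bounded by $|\operatorname{Im}z_k|^{-1}\lesssim R^{-1}$. The relation $[A,p^2]=2ip^2$ merely shifts arguments, $p^2f(A)=f(A-2i)p^2$; it does not make the difference small. Only a one-sided comparison based on $p^2\ge K_-^2$ is true, where $K_-\le|p|\le K_+$ on $\operatorname{supp}\chi$.
\item The monotonicity of $\langle P^-(A)\rangle$ along the free flow gives an integrated bound, not a rate, and iterating it does not produce $\tau^{-2}$. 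You would need a time-dependent observable with integrable errors. You would also still have to carry $p^2\ge K_-^2$ through the $A$-cutoffs, which is the same difficulty.
\end{itemize}

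The paper never splits $g$. It puts $(A^2+1)$ on the data and treats the noncommutativity with the exact identity \eqref{P-} and the sign of $Ap^2+p^2A=2pAp$. Be aware, though, that any reduction placing a function of $A$ between $e^{i\Delta\tau}$ and $\chi(p)$, such as $P^-(A)e^{i\Delta\tau}(A^2+1)^{-1}\chi(p)$, produces tails of the same kind. There $(A^2+1)^{-1}$ has kernel $\tfrac12e^{-|s-s'|}$, and the operator decays no faster than $(\tau K^2)^{-1/2}$ for fixed $M,R$.

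A clean way to close your argument is to keep the exact momentum support after the flow and work in the $(s,\omega)$ picture. There $\|P^-(A)e^{i\Delta\tau}g\|$ is a one-dimensional Fourier-multiplier estimate for $e^{-i\tau e^{2s}}G$.
\begin{itemize}
\item For spectral values $\xi\le\tau K_-^2$, the phase derivative $\xi-2\tau e^{2s}$ has modulus $\ge\tau K_-^2$ on $\operatorname{supp}G$. Two integrations by parts in $s$, writing $(2\tau e^{2s}-\xi)^{-k}$ as a Laplace integral to retain Plancherel, give $C(K_+/K_-)\,(\tau K_-^2)^{-2}\sum_{j\le2}\|A^jg\|$.
\item For $\xi>\tau K_-^2$ you pay only $P^-(\tau K_-^2)\|g\|\le e^{-2\tau K_-^2/R}\|g\|$.
\end{itemize}
Your Step 3 computation then bounds $\sum_{j\le2}\|A^jg\|$ by the norm on the right of \eqref{LD2}. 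With the factor $K^2$ you found there, this gives $\langle K\tau\rangle^{-2}$ for $K\gtrsim1$.
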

\begin{proof}
$$
\begin{aligned}
& P^{-}(A) e^{i \Delta \tau} \chi f(x)=P^{-}(A) e^{i \Delta\tau}\left(A^{2}+1\right)^{-1}\left(A^{2}+1\right)\chi  f \\
& \left(A^{2}+1\right) \chi f=\tilde{\chi}\left(A^{2}+1\right)\chi f+\left[A^{2},\tilde \chi \right] f \\
& =\tilde \chi\left(\left(x \cdot\nabla_{x}\right)^{2}+\left(x \cdot \nabla_{x}+C\right)\right )\chi f+O(K^{-2})\tilde \chi^{\prime}(\Delta-1)\chi f.
\end{aligned}
$$

Using the assumption that $f \in H^{2},\langle x\rangle^{2} f \in H^{2}$ the RHS is bounded by
$$
c\| \chi f\|_{H^{2}}
$$
where $\chi=\chi(|p| \sim k), \quad \tilde\chi \chi =\chi$.
So, it remains to estimate
$$
P^{-}(A) e^{i \Delta \tau} \chi\left(A^{2}+1\right)^{-1}
$$

We now move $\left(A^{2}+1\right)^{-1}$ to the Left:
$$
\begin{aligned}
& {\left[\chi,\left(A^{2}+1\right)^{-1}\right]=-\left(A^{2}+1\right)^{-1}\left[\chi, A^{2}+1\right]\left(A^{2}+1\right)^{-1}} \\
& i\left[A^{2},\chi \right]=A i[A,\chi ]+i[A, \chi] A \\
& =A p \chi^{\prime}(p)+p \chi{\prime}(p) A=2 p \chi{\prime}(p) A \\
& +\left(\chi{\prime}(p)+p^{2} \chi^{\prime \prime}\right) A
\end{aligned}
$$

Therefore
$$
\begin{gathered}
\chi\left(A^{2}+1\right)^{-1}=\left(A^{2}+1\right)^{-1} \chi-\left(A^{2}+1\right)^{-1} G(p) A\left(A^{2}+1\right)^{-1} \\
G(p)=O(1) .
\end{gathered}
$$

So, it is sufficient to estimate
$$
\begin{aligned}
& P^{+}(A) e^{-i \Delta \tau}\left(A^{2}+1\right)^{-1} \chi(p)= \\
& =P^{+}(A)\left(\left(A-2 p^{2} \tau\right)^{2}+1\right)^{-1} e^{-i \Delta \tau} \chi(p)
\end{aligned}
$$

Formally, since $P^{-}(A)$ projects on $A \leq 0$ up to small corrections,then $A-2 p^{2} \tau<0$, so at the "symbol" level the result is clear.
However, $P^{-}(A)$ is not a $\psi$DO, and we cannot use symbol calculus.
First, we approximate $A-2 p^{2} \tau$ by
$$
\begin{aligned}
& P^{-}(A) A-2 p^{2} \tau: \\
& {\left[\left(A-2 p^{2} \tau\right)^{2}+1\right]^{-1}-\left[\left(P^{-}(A) A-2 p^{2} \tau\right)^{2}+1\right]^{-1} } \\
= & {\left[\left(P^{-} A-2 p^{2} \tau\right)^{2}+1\right]^{-1}\left\{\left(P^{-} A+P^{0} A\right)\left(A-4 p^{2} \tau\right)+\left(P^{-} A+P^{0} A\right) A\right\} \times } \\
& {\left[\left(A-2 p^{2} \tau\right)^{2}+1\right]^{-1} }
\end{aligned}
$$

The term $P^{-}A\left[\left(P^{-} A-2 p^{2} \tau \right)^{2}+1\right]^{-1}$ satisfies the estimate we want, being bounded by
$$
\left[\left(1+2 p^{2} \tau\right)^{2}+1\right]^{-1} .
$$

To control the difference between the resolvents of
$$
A_{\tau}^2\equiv \left(A-2 p^{2} \tau\right)^{2} \text { and }\left(P^{-} A-2 p^{2} \tau\right)^{2} \text {, }
$$
we commute the $P^{-}(A)$ through, and use that
$P^{+}(A) P^{-}(A)$ is exponentially small $P^{+}(A) P^{-}(A) \sim e^{-|A| / R}$ for $|A|$ large,
$P^{+}(A) P^{0}(A)$ is exponentially localized near $A=+M$.

So

\begin{align}\label{Atau}
& P^{-}(A)\left\{\left[\langle A_{\tau}\rangle^{2}+1\right]^{-1}-\left[\left\langle P^{-}(A) A_{\tau}\right\rangle^{2}+1\right]^{-1}\right\} \\
& =\left(\langle A_{\tau}\rangle^{2}+1\right)^{-1}\left[P^{-}(A)\left(P^{-}+P^{0}\right) 4Ap^{2} \tau+\text { h.o.t. }\right]\left(\left\langle A_{\tau} P^{-}\right\rangle^{2}+1\right)^{-1} \\
& +\left[P^{-}(A),\left[\langle A_{\tau}\rangle^{2}+1\right]^{-1}\right]\left[\left\langle P^{-}+P^{0}\right) 4Ap^{2} \tau+\text { h.o.t. }\right]\left(\left\langle A_{\tau} P^{-}\right\rangle^{2}+1\right)^{-1} \\
& =\left(\langle A_{\tau}\rangle^{2}+1\right)^{-1} O\left(e^{-|A-M|/R} M\right) 4 p^{2} \tau\left(\left\langle A_{\tau} P^{-}\right\rangle^{2}+1\right)^{-1} \\
& \left.+\left[P^{-}(A),\left[\langle A_{\tau}\rangle^{2}+1\right]^{-1}\right]\left[\left(P^{-}+P^{0}\right) 4Ap^{2} \tau+\text { h.o.t. }\right]\left(\langle A_{\tau}P^{-}\right\rangle^{2}+1\right)^{-1} .
\end{align}

The first term on the RHS is bounded by
$$
O\left(e^{-|A -M| / R} M\right) \min\left\{\frac{1}{1+p^{2} \tau}+\frac{1}{1+\langle P^{-} A\rangle}\right\}
$$

The second term on the RHS is more difficult to control, since commuting a projection is creating an error term which is not a projection on the other side.
This is the reason why one has to use a special form of the projection; see \cite{soffer2011monotonic}. To this end, we use the following identity:
Let $A$ be the dilation generator, and
$$
U_{\theta}=e^{-i \theta A}
$$
the group of dilations on $L^{2}\left(\mathbb{R}^{n}\right),\quad \theta$-real.
Then for a general  operator $B$ on $L^{2}\left(\mathbb{R}^{n}\right),$  we define
$$
U_{\theta} B U_{-\theta} \equiv B_{\theta}.
$$

We will now extend these identities to the complex $\theta=\theta_{0}+i \theta_{1} \quad\left|\theta_{1}\right|<\delta$, provided that $B$ is a "dilation analytic", in the sense that $B_{\theta}$ makes sense on some common domain $D_{\theta} \subset L^{2}$, for all $\operatorname{such} \theta$.

We then have
\begin{proposition}[\cite{soffer2011monotonic}]
Let $B$ be an operator such that $B_{\theta}$ is dilation-analytic in $|\Im \theta|<\delta.$
Then,
\begin{equation}\label{P-}
    i[P^-(A),B]=\cosh^{-1}\frac{(A+M)}{R}[B_{-i/R}-B_{i/R}]\cosh^{-1}\frac{(A+M)}{R}
\end{equation}
for all $R>c=\mathcal{O}(1).$
\end{proposition}

Now we can control the commutator term with $P^-(A).$
Clearly, $\langle A-2p^2\tau\rangle^{-2}$ is dilation analytic for small $\theta$, since ($\beta$ - real)
$$
e^{\beta A}(2p^2\tau +1)^{-2}e^{-\beta A}=(2e^{2\beta i}p^2\tau+1)^{-2}.
$$
$$
\tau e^{2\beta i}p^2+1=\tau \cos{2\beta}p^2+1+i\sin{2\beta}p^2\tau,
$$
and for $\beta$  sufficiently small, the real part is larger than 1, for $\tau\geq 0.$

Then,
\begin{align*}
& 1+(A- \tau e^{2\beta i}p^2)^2=1+(A-2p^2\tau)^2\\
&-\epsilon(\beta)[Ap^2\tau+p^2\tau A]+4\cos{4\beta}p^4\tau^2\\
&+i[4A\sin{2\beta}p^2\tau]+4\sin{4\beta}p^4\tau^2]\neq 0.
\end{align*}
Here $\epsilon(\beta)=2-2\cos{2\beta}>0$ for all $\beta \neq 0$ small.

The sign of $Ap^2+p^2A$ is the same as the sign of $A$, since $Ap^2+p^2A=2pAp.$

Consequently, the commutator term with $P^-(A)$ in \eqref{Atau} using equation \eqref{P-},
is controlled by the norm of 
\begin{align*}
& \cosh^{-1}{(\beta A)}(P^-+P^0)Ap^2\tau\left(\langle A_{\tau}P^-\rangle^2+1\right)^{-1}\\ 
&\leq \frac{M}{\cosh{(\beta A)}}\left(\frac{1}{p^2\tau+1}+\frac{1}{\langle AP^-\rangle +1}\right ).
\end{align*}
Finally, we note that the resulting estimate of
$$
P^-(A)[\langle A_{\tau}\rangle^2+1]^{-1}\leq \left(\frac{1}{p^2\tau+1}+\frac{1}{\langle AP^-\rangle +1}\right )
$$
can be used to control terms linear in $A$, since they are multiplied from the left by $P^-(A)[\langle A_{\tau}\rangle^2+1]^{-1}.$
This gives the final statement of the proposition. \end{proof}
We can now use the proposition on Local Decay of Incoming Waves \eqref{LD2} to control the Duhamel term in $H^{2}$, on the range of $P^{-}(A)$.
Since the initial data are in $H^{2}$, we only need to control the Duhamel term.

Integrating by parts, we get
$$
\begin{aligned}
-i \Delta & \int_{0}^{t-\varepsilon} e^{i \Delta(t-s)} V(s) \psi(s) d s \\
= & e^{i \Delta \varepsilon} V(t-\varepsilon) \psi(t-\varepsilon)-V(t=0) \psi(t=0) \\
& -\int_{0}^{t-\varepsilon} e^{i \Delta(t-s)}\left(\frac{\partial V}{\partial s} \psi(s)+i V \Delta \psi(s)-i V^{2} \psi(s)\right) d s
\end{aligned}
$$

So, we only need to show that the RHS is uniformly bounded in $L^{2}$.
The first and second terms are bounded in $L^{2}$ if $\langle x)^{m} V(x, s)$ is bounded from $H^{2} \rightarrow L^{2}$, since we proved that $\langle x\rangle^{-\sigma} \psi(s)$ is in $H^{2}$. Similarly $\frac{\partial V}{\partial s} \psi-i V \Delta \psi-i V^{2} \psi(s)$ are in $L^{2}$. So, we need to prove the integrability in $s_{\text {. }}$
For this, we use
$$
\left\|P^{-}(A) e^{i \Delta(t-s)} g(|p|\geq \delta)\langle x\rangle^{-\sigma}\right\| \leqslant c\langle t-s\rangle^{-2},
$$
for $\delta>0$.

We do not have the case $|p|<\delta$, since in this region the solution is in $\mathrm{H}^{s}$ for all $s>0$.
The integral in the interval $s \in(t-\varepsilon, t)$ is trivially bounded in $L^{2}$. We conclude:

\begin{theorem} {Incoming Waves}

If the initial data is in $\mathrm{H}^{2}$, then
\begin{equation}\label{In-Reg}
P^{-}(A) \psi(t) \in H^{2} \text {, uniformly in } t \text {. }
\end{equation}
\end{theorem}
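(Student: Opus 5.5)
The plan is to bound $\|\Delta P^{-}(A)\psi(t)\|_{L^2}$ uniformly in $t$; the $L^2$ part is free from $L^2$ conservation. Write the Duhamel formula
\[
\psi(t)=e^{i\Delta t}\psi(0)-i\int_0^t e^{i\Delta(t-s)}V(s)\psi(s)\,ds ,
\]
apply $P^{-}(A)\Delta$, and treat the pieces separately. For the free term, note that $[\Delta,A]=2i\Delta$, so $\Delta P^{-}(A)=P^{-}(A+2i)\Delta$ at least formally. Since $P^-$ is an analytic function of $A$ in a strip (for $R$ large), $P^{-}(A+2i)$ is bounded, or we commute only the first-order part. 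This gives $\|\Delta P^{-}(A)e^{i\Delta t}\psi(0)\|\lesssim\|\psi(0)\|_{H^2}$. The same commutation applies to every term below, so it suffices to bound $P^-(A)\Delta(\cdot)$ in $L^2$.

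Next I split the Duhamel integral into $[t-\varepsilon,t]$ and $[0,t-\varepsilon]$.

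\textbf{Short interval $[t-\varepsilon,t]$.} I follow exactly the integration by parts from the proof of Theorem~\ref{Local1}. Writing $\Delta e^{i\Delta(t-s)}=i\partial_s e^{i\Delta(t-s)}$ produces the following terms:
\begin{itemize}
\item the boundary terms $V(t)\psi(t)$ and $e^{i\Delta\varepsilon}V(t-\varepsilon)\psi(t-\varepsilon)$, which are bounded in $L^2$;
\item the term with $\partial_sV\,\psi$, which is bounded by $c\varepsilon$;
\item the term with $V\Delta\psi+V^2\psi$.
\end{itemize}
Since $\|\langle x\rangle^{\sigma}V\|_\infty<c$, the last term is controlled by $\varepsilon\sup_s\|\langle x\rangle^{-\sigma}\Delta\psi(s)\|$. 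That quantity is uniformly bounded by Theorem~\ref{Local1}, so no bootstrap on $P^-\psi$ itself is needed here. $P^-(A)$ is bounded, so it does no harm.

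\textbf{Long interval $[0,t-\varepsilon]$.} I integrate by parts in $s$ on the whole interval, as displayed just before the theorem. The boundary terms are bounded as before. We are left with $\int_0^{t-\varepsilon}P^-(A)e^{i\Delta(t-s)}g_s\,ds$, where
\[
g_s=\partial_sV\,\psi+iV\Delta\psi-iV^2\psi=\langle x\rangle^{-\sigma}\bigl(\langle x\rangle^{\sigma}\partial_sV\,\psi+\langle x\rangle^{\sigma}V\,\langle x\rangle^{\sigma}\cdot\langle x\rangle^{-\sigma}\Delta\psi+\dots\bigr).
\]
By the decay hypotheses on $V$ and Theorem~\ref{Local1}, $g_s=\langle x\rangle^{-\sigma}h_s$ with $\sup_s\|h_s\|_{L^2}<\infty$ (this uses $\sigma$ large enough to split the weight). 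Then I split $g_s$ in frequency with $g(|p|\ge\delta)$ and $1-g$.

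For the high-frequency part, decompose dyadically $|p|\sim K$ with $K\ge\delta$ and apply the Local Decay--Incoming Waves proposition \eqref{LD2}. This gives $\langle K(t-s)\rangle^{-2}$, which is integrable in $s$ over $[0,t-\varepsilon]$ uniformly in $t$, and is summable in $K$ after losing at most the $H^2$ weight already available. Note that \eqref{LD2} requires $\langle x\rangle^{\sigma}f\in H^2$ localized at frequency $K$; this is where the weight is spent.

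For the low-frequency part $|p|<\delta$, no integration by parts is needed at all. We go back to the original Duhamel term $\Delta\int e^{i\Delta(t-s)}(1-g)(p)V\psi$, where $\Delta(1-g)$ is a bounded operator. The $L^1\to L^\infty$ estimate with $|t-s|^{-n/2}$, $n\ge3$, as in the Low Frequency step of Theorem~\ref{Local1}, then gives an integrable bound. Strictly speaking that step bounds a localized norm; here we need a global $L^2$ bound, so I would instead use the fact that $P^-$ of a frequency-localized piece has the same $H^2$ norm as its $L^2$ norm up to $\delta^2$. This requires uniform $L^2$ control of the low-frequency Duhamel term, which is provided by $L^2$ conservation of $\psi$ minus the free part.

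\textbf{Main obstacle.} I expect the hard step to be the uniform-in-$K$ summation in the high-frequency piece. Estimate \eqref{LD2} is stated for $K\tau\gg1$. For $K\tau\lesssim1$, i.e.\ $K$ near $\delta$ with $t-s\ge\varepsilon$, one uses the trivial bound on a bounded $s$-interval of length $\lesssim 1/K$. The dyadic sum then requires the right-hand side $\|\chi_K\langle x\rangle^\sigma f\|_{H^2}$ to be square-summable, which holds only if $\langle x\rangle^{\sigma}g_s\in H^2$. Since we only know $\langle x\rangle^{-\sigma}\Delta\psi\in L^2$, I would apply \eqref{LD2} in its $L^2$ form, gaining the two extra derivatives from $(A^2+1)^{-1}$ rather than from $f$. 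That is, I would reread the proof of \eqref{LD2} so that its right-hand side involves $\|\langle x\rangle^{\sigma}f\|_{L^2}$ at frequency $K$ times $K^{-2}\langle K\tau\rangle^{-2}\cdot K^2$, and then sum using almost-orthogonality in $K$.
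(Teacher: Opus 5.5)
Your proposal is correct and follows essentially the paper's route. Both arguments use the Duhamel formula, integrate by parts on $[0,t-\varepsilon]$, control the boundary and integrand terms through the Local Smoothness theorem, obtain integrability in $s$ from the incoming local-decay bound $\|P^{-}(A)e^{i\Delta\tau}g(|p|\ge\delta)\langle x\rangle^{-\sigma}\|\lesssim\langle\tau\rangle^{-2}$, and treat low frequencies and the interval $[t-\varepsilon,t]$ trivially. Your additions (writing $\Delta P^{-}(A)=P^{-}(A\pm 2i)\Delta$, the dyadic, almost-orthogonal reading of \eqref{LD2} with $L^2$ input, and the $L^2$-conservation argument for $|p|<\delta$) fill in steps the paper asserts in one line rather than change the argument.
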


\section{More On Incoming Waves}

As we saw in the previous section, most of the correction to the free flow is outgoing. We will further control the Incoming Waves by PROBs.
To control the incoming waves, we need a PRES that registers the fact that solutions become outgoing for large times.
The first PROB we use is $F\left(\frac{A+M}{R} \leq-1\right)$.

\begin{proposition}

The $H^{1}$ norm is bounded for all times, on average, on the negative spectral part of $A$ (incoming).
\end{proposition}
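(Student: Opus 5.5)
The plan is to use the propagation observable $B(t)=F\left(\frac{A+M}{R}\leq -1\right)$, where $F$ is a smooth nonnegative, nonincreasing cutoff equal to $1$ on $(-\infty,-2]$ and $0$ on $[-1,\infty)$. I would write $B=F(\cdot)$ in the form $F=\tilde F^2$, so that $F'=2\tilde F\tilde F'$. The Heisenberg derivative of $B$ along the flow of $H(t)=p^2+V(x,t)$ is
$$D_H B = i[p^2,F]+i[V,F].$$
The key identity is $i[p^2,A]=2p^2$, which I would use via functional calculus (for instance the Helffer--Sj\"ostrand formula, or the $\cosh$-commutator identity \eqref{P-} from \cite{soffer2011monotonic} applied to $B=p^2$). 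Writing $p^2_{\pm i/R}=e^{\pm 2i/R}p^2$, this gives
$$i[p^2,F]=\frac{2}{R}\,\tilde F'\,p^2\,\tilde F' + \mathcal{O}(R^{-3})\,\tilde F''\langle p^2\rangle\tilde F''.$$
Since $F'\leq 0$, the leading term has a definite sign. Integrating in time then yields
$$\langle F\rangle_T-\langle F\rangle_0 \;\leq\; -\frac{2}{R}\int_0^T\|p\,\tilde F'\psi(s)\|^2ds+\int_0^T|\langle i[V,F]\rangle_s|ds+\text{error}.$$
Because $0\leq F\leq 1$ and $\|\psi\|_2$ is conserved, the left side is bounded. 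It follows that $\int_0^T\|p\,\tilde F'(\tfrac{A+M}{R})\psi\|^2ds$ is controlled, which is a time-averaged $H^1$ bound on the transition region of the incoming part.

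Next I would remove the restriction to the transition region. I would sum over a dyadic family of such observables, $F_j=F\left(\frac{A+M_j}{R}\leq -1\right)$ with $M_j=2^jM$, so that the $\tilde F_j'$ form a partition of unity on $\{A\leq -M\}$. Alternatively, I would use the weighted observable $B=(A+M)_-\,F$, whose derivative produces $p^2 F$ itself, with a controlled error from $p^2 F'$ by the previous step. The conclusion I aim for is
$$\frac1T\int_0^T\|p\,P^-(A)\psi(s)\|^2ds\lesssim 1.$$
This statement is consistent with the incoming-wave theorem \eqref{In-Reg}, and in fact follows from it pointwise. However, I want the PROB version, which does not use the Duhamel argument.

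The main obstacle is the potential term $i[V,F]$, because $P^\pm$ are not pseudodifferential. I would first try to handle it with \eqref{P-} again. This gives
$$i[V,F]=\cosh^{-1}\tfrac{A+M}{R}\,\bigl(V_{-i/R}-V_{i/R}\bigr)\cosh^{-1}\tfrac{A+M}{R},$$
and $V_{\pm i/R}-V=\mathcal{O}(R^{-1}|x\cdot\nabla V|)$ if $V$ extends analytically in dilations. If $V$ is only smooth, I would instead expand the commutator to first order, $[V,A]=i\,x\cdot\nabla V$, which is bounded with decay $\langle x\rangle^{-\sigma+1}$, and place the localized factor on the state. The resulting bound $\int|\langle i[V,F]\rangle|ds\lesssim R^{-1}\int\|\langle x\rangle^{-(\sigma-1)/2}\psi\|^2ds$ requires a local decay estimate, which is not available in general since part of the solution does not scatter. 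I therefore expect the claim to be only "on average": the term is bounded by $CT/R$, and dividing by $T$ leaves a constant. This is precisely the averaged $H^1$ bound, using that $p^2$ enters linearly and that the localized part is already in $H^2$ by Theorem \ref{Local1}.
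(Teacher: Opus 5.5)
Your argument is essentially the paper's: the same PROB $F\bigl(\frac{A+M}{R}\le -1\bigr)$, the same negative leading term $\frac1R\langle p\psi,\tilde F' p\psi\rangle$, and the potential commutator kept as an error that grows at most linearly in $T$. Two details differ. Your displayed formula for the leading term has the sign flipped: writing $-F'=g^2$, it is $\frac{2}{R}|p|F'|p|=-\frac{2}{R}|p|\,g^2|p|$ up to $\mathcal{O}(R^{-3})$. Also, the paper extracts an extra factor $1/M$ from the potential term via $F(A+i)^{-1}\sim F/M$ and Theorem \ref{Local1}; your averaged claim does not need this, and your crude bound $\|i[V,F]\|\lesssim 1/R$ uses only $L^2$ conservation.

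The one step that fails as written is the dyadic family with fixed $R$ and $M_j=2^jM$. Those width-$R$ windows do not cover $\{A\le -M\}$, and even for a genuine cover the unweighted bounds $\tfrac{R}{2}+CT$ per window do not sum. So the sum must be weighted, as the paper does with $R_n=2^nR_0$ and weights $R_n^2/n$, at the price of needing $\langle\psi_0,A^2F\psi_0\rangle<\infty$. Alternatively, your monotone observable $|A+M|F$ does work: its derivative in $A$ is bounded, so its potential commutator is bounded, and this gives $\int_0^T\langle p\psi,Fp\psi\rangle\,ds\lesssim\langle\psi_0,|A+M|F\psi_0\rangle+T$. That route requires $\psi_0$ to lie in the form domain of $|A|$, a hypothesis you should state.
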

\begin{proof}
\begin{equation}
\partial_{t}\left\langle F\left(\frac{A+M}{R} \leqslant-1\right)\right\rangle=\frac{1}{R}\left\langle p \psi, \tilde{F}^{\prime} p \psi\right\rangle+\langle i[V, F]\rangle \tag{5,1}
\end{equation}

The first term is negative, so we get from (5,1)

\begin{align}
\langle F\rangle_{T} & -\langle F\rangle_{0}+\frac{1}{R} \int_{0}^{T}\langle p \psi,| \tilde{F}\left(\frac{A+M}{R} \sim-1\right)|\, p \psi\rangle d s \\
& \lesssim\left|\int_{0}^{T}[V, F] d s\right| \tag{5,2}
\end{align}

We get an estimate of the first derivative in the region $A \sim-M$ (in an interval of size $R$ ), in terms of the potential.
Next, we show that the potential term is of higher order in $\left(\frac{1}{R}\right)$ :
Write $V$ as $\langle x)^{-\sigma}\langle x\rangle^{\sigma} \vec{V}\langle x\rangle^{\sigma}\langle x\rangle^{-\sigma} \equiv\langle x\rangle^{-\sigma} \tilde{V}\langle x\rangle^{-\sigma}$.
Then

\begin{align*}
& {[V, F]=\left[\langle x\rangle^{-\sigma}, F\right] \tilde{V}\langle x\rangle^{-\sigma}+\langle x\rangle^{-\sigma} \tilde{V}\left[\langle x\rangle^{-\sigma}, F\right]} \\
& +\langle x\rangle^{-\sigma}[\tilde{V}, F]\langle x\rangle^{-\sigma} \\
& =\frac{1}{R}\langle x\rangle^{-\sigma} \tilde{F}^{\prime} \tilde{V}\langle x\rangle^{-\sigma}+\langle x\rangle^{-\sigma} \tilde{V} \tilde{F}^{\prime}\langle x\rangle^{-\sigma} \frac{1}{R} \\
& +\langle x\rangle^{-\sigma}[\tilde{V}, F]\langle x\rangle^{-\sigma} \tag{5,3}
\end{align*}

Here, we used $[x, F]=\tilde{F}^{\prime} x=x \tilde{F}^{\prime}$ where $\tilde{F}^{\prime}$ stands for a discrete derivative of $F$ in the $\pm i$ direction.

We get another power of $\left(\frac{1}{M}\right)\left(<\frac{1}{R}\right)$ using
$$
F(A+i)^{-1} \sim \frac{1}{M} F
$$

Then, write $F=F(A+i)^{-1}(A+i)$
$$
=\tilde F \frac{1}{M}(x \cdot p+c).
$$
We can write all terms as
$$
\frac{1}{M R}\left\langle p\langle x\rangle^{-\sigma} \psi, O(1) p\langle x\rangle^{-\sigma} \psi\right\rangle .
$$

In particular, the term, by the Commutator Expansion
Lemma,
$$
\begin{aligned}
& \langle x\rangle^{-\sigma}[\tilde{V}, F]\langle x\rangle^{-\sigma}= \\
& =\langle x\rangle^{-\sigma}(x \cdot \nabla \tilde{V}) \cdot \tilde{F}^{\prime} \frac{1}{R}\langle x\rangle^{-\sigma} \\
& +\langle x\rangle^{-\sigma} \frac{1}{R^{2}} O_{2}(F)\langle x\rangle^{-\sigma} \\
& =\frac{1}{R}\langle x\rangle^{-\sigma}(x \cdot \nabla \tilde{V})(A+i)(A+i)^{-1} \tilde{F}^{\prime}\langle x\rangle^{-\sigma} \\
& +\langle x\rangle^{-\sigma} \frac{1}{R^{2}} O_{2}(F)\langle x\rangle^{-\sigma} \\
& =\frac{1}{M R} P\langle x\rangle^{-\sigma}(x \cdot \tilde{V})\langle x\rangle^{-\prime}\langle x\rangle^{-\sigma} \\
& +\langle x\rangle^{-\sigma} \frac{1}{R^{2}} \mathcal{O}_{2}(F)\langle x\rangle^{-\sigma} .
\end{aligned}
$$
We need to know that $\mathcal{O}_2(F)$ is bounded.
\begin{equation}
\mathcal{O}_2(F)=\int{\hat F(\lambda)}e^{-i\lambda A}d \lambda \int_0^\lambda ds e^{isA} \int_0^sdu e^{-iuA}(x\cdot \nabla)^2\tilde Ve^{iuA}. 
\end{equation}
So, an immediate bound in $L^2$ is obtained if $\|x^2\Delta \tilde V \|_{L^{\infty}}<c<\infty.$

But in fact, a weaker condition is sufficient:
\begin{equation}
  \int_0^sdu e^{-iuA}(x\cdot \nabla)^2\tilde Ve^{iuA}\langle p\rangle^{-1}    
\end{equation}
be bounded.

The above integral reduces to 
\begin{equation}
\int_0^sdy \frac{f(y)}{y}\langle p\rangle^{-1}; \quad f(z)\equiv (z\cdot \nabla_z)^2\tilde V(z,t).    
\end{equation}
 Near $x=0$, $\langle p\rangle^{-1}$ bounds $\frac{1}{|x|},$
 so the singularity near zero is reduced to $|z|^2(\Delta \tilde V(z)).$
 Up to this point we have an estimate for a fixed $A\sim M.$


{\bf We now iterate this estimate to higher Sobolev norm.}

We use as PROB
\begin{equation}
\left\langle p \psi(t), A F\left(\frac{A+M}{R} \leq-1\right) p \psi\right\rangle \leq 0 .
\end{equation}

In this case, the RHS, similar to previous estimate, is given by
$$
\begin{aligned}
 \quad & \int_{0}^{T}\left\langle p^{2} \psi(t), \tilde{F'}\left(\frac{A+M}{R} \leqslant-1\right) p^{2} \psi(t)\right\rangle d t \\
& +\int_{0}^{T}\langle i[V, p A F p]\rangle d t
\end{aligned}
$$

The estimate of the potential is as before. The commutator has two derivatives w.r.t. $x$. Hence, we can pull $(x)^{-\sigma}$ on both sides and one more derivative on each side; then we can use that locally in $x$, the $H^{2}$ norm is uniformly bounded.
Therefore, again the potential term is bounded by $T.$ Notice that the LHS is integrated on $\left[0, T^{\prime}\right]$ s.t. the LHS is bounded for such $T^{\prime}$, using the previous estimate.

{\bf Upgrading the Estimates}
  We will sum over dyadic intervals covering the negative real line, starting from $-M_0.$
 Multiplying by $\left(R_{0} 2^{n}\right)^{2}( n)^{-1}(n \geqslant 1)$,
using for each $n=0,1,2,3...,$  \quad $F_n(\frac{A+M_0+2^nR_0}{2^nR_0}\leq -1).$
After multiplying by $R_0^2 2^{2n}/n$ we take the sum over $n.$

The resulting inequality becomes:
\begin{align}
&\langle\psi(T),A^2F(\frac{A+M_0}{R_0}\leq -1)\psi(T)\rangle\\
&-\langle\psi(0),A^2F(\frac{A+M_0}{R_0}\leq -1)\psi(0)\rangle\\
&+\int_0^T\langle p\psi(s),|F(\frac{A+M_0}{R_0}\leq -1)A\ln^{-1}{\langle A\rangle}|p\psi(s)\rangle ds\\
&\approx\int_0^T\langle p\psi(s)\langle x \rangle^{-\sigma} F(\frac{A+M_0}{R_0}\leq -1)\ln^{-1}{\langle A\rangle}\langle x \rangle^{-\sigma}p \psi(s)\rangle ds\\
&\leq c\|\psi(0)\|^2_{H^2} T.
\end{align}

Here, we use that 
\begin{align}
&\sum_n F_n\geq   F(\frac{A+M_0}{R_0}\leq -1)\\
& \sum_n 2^{kn}R_0^kF_n \simeq A^k  F(\frac{A+M_0}{R_0}\leq -1) \ln{|A|}.
\end{align}
 See \cite{liu2025large}.
 We therefore conclude that the {\bf average over time} of 
 \begin{align}
&\langle p\psi(s),|A|\ln^{-1}{\langle A\rangle}F(\frac{A+M_0}{R_0}\leq -1)p\psi(s)\rangle\\
\end{align}
is bounded.

{\bf Upgrading to pointwise estimate}

To obtain estimates that are better than those bound by $T$, we need to estimate the potential term in terms of the leading term on the RHS of the PRES.

The leading order term is of the form:
$$
\int_{0}^{T}\left\langle p \psi, \frac{A}{\ln \langle A \rangle} F\left(\frac{A+M_{0}}{R} \leq-c\right) p \psi\right\rangle_{s} d s
$$
(This is derived by multiplying by $R 2^{n} / n$ and summing).

The potential term (Assuming first that $V$ is dilation analytic) is:
 $$\left.\int_{0}\langle p \psi, x \cdot \tilde{V}|x|^{2} F_{A}\left\langle\frac{A+M_{0}}{R} \leqslant-c\right) p \psi\right\rangle d s$$
Hence $i\left[V, F_{A, M}\right]=c \frac{1 / R}{\cosh \left(\frac{A+M}{R}\right)} x \cdot \tilde{V} \frac{1}{\cosh \left(\frac{A-M}{R}\right)}$
with
$$
x\cdot \tilde{V} \equiv V_{i R^{-1}}-V_{-i R^{-1}}=V\left(\frac{i|x|}{R}\right)-V\left(\frac{-i|x|}{R}\right)
$$

Then, we calculate (using the notation $\operatorname{ch} \equiv \cosh \left(\frac{A+M}{R}\right)$)
$$
\begin{aligned}
& \left(\frac{1}{\cosh \left(\frac{A+M}{R}\right)} x \cdot \tilde{V} \frac{1}{\cosh \left(\frac{A+M}{R}\right)}\right)= \\
= & \frac{1}{\operatorname{ch}} \frac{1}{A+i}(A+i) x \cdot \tilde{V}(A+i) \frac{1}{A+i} \frac{1}{ch} \\
= &
\end{aligned}
$$
$$
\begin{aligned}
= & c \frac{1}{(i+A) ch} x\cdot \tilde{V}|x|^2\frac{1}{(i+A) ch}+\frac{p}{(i+A) ch} x\cdot \tilde{V}|x|^2 \frac{1}{(i+A) ch} \\
& +\left\{\frac{1}{\{(i+A) c h}\left[p, A\, ch\right] \frac{1}{(i+A) c h} x\cdot \tilde{v}|x|^{2} \frac{1}{(i+A) ch}+\text { similar }\right\}
\end{aligned}
$$

Using $\frac{1}{\operatorname{ch}^{2}}=\frac{1}{\cosh ^{2}\left(\frac{A+M}{R}\right)}$ localizes 
$A$ around $-M$ with width $R$, multiplication by $R^{2} 2^{n+2} / n$ of the quantity
$$
\frac{1}{\cosh ^{2}\left(\frac{A+M+2^{n} R}{R}\right)},
$$
 and summing over $n=0,1,2 \ldots$,
this sum is essentially
$$
(\ln \langle A\rangle)^{-1}(A+i)^{-2} A^{2} F_{A}\left(\frac{A+M}{R} \leq-C\right) .
$$

So, we project on $\left|\frac{A}{\ln \langle A\rangle}\right|>K \gg\left\||x|^{2} x \cdot \tilde{V}\right\|_{L^{\infty}}$ 
and we find that the leftover term is
$$
\int_{0}^{T}\left\langle p \psi, F_{A} F_{1} E(|A| \leqslant K)\right||x|^{2} x \cdot \tilde{V}|p \psi\rangle_{s} d s
$$

But for $K \ll M_{0}$, we have that
$$
F_{A}\left(\frac{A+M_{0}}{R} \leq-c\right) F_{A}(|A| \leq K) \approx e^{-M_{0}} .
$$

So, the leftover term is bounded by
$$
c e^{-M_{0}} T_{0}.
$$

We therefore get the following estimate

\begin{align}\label{income}
& \left\langle\psi(t), {A}^{2} F\left(\frac{A+M_{0}}{R} \leq-C\right) \psi(t)\right\rangle_{T}-\left\langle\psi(0), A^{2} F_{A} \psi(0)\right\rangle_{0} \\
& =\int_{0}^{T}\left\langle p \psi(s),(\ln \langle A\rangle)^{-1}(A-\tilde{W}(x)) F(|A|>K) F_{A} p \psi(s)\right\rangle d s \\
& +\int_{0}^{T}\left\langle p \psi(s), F_{A} \tilde{W} F(|A| \leq K) p \psi(s)\right\rangle d s+h .o. t
\end{align}

The last term on the RHS is bounded by
$$
c e^{-M_{0}} T
$$

The first term on the RHS is negative since
$$
(|A|-\tilde{W}) F_{K}(A>K) \sim \tilde{F}_{K}(|A|-\tilde{W}) \tilde{F}_{K} \geqslant \theta \tilde{F}_{K}^{2}
$$
where $\tilde{F}_{K}^{2}=F_{K}$ and we used that
$$
K \gg \||x|^{2} x \cdot \tilde{V} \|_{L^{\infty}}.
$$

The h.o.t. come from commutators. Such commutators are of higher order in $(1 / R)$ and/or $A,$ as
$[p, A] \sim p ;[x, F(A / R)] \sim x \tilde{F}^{\prime} / R$.

Up to this point, we used dilation analyticity of the potential to write 
$$
i[V,F]\sim \frac{1}{ch} x\cdot\tilde {V}\frac{1}{ch}.
$$
However, it is not necessary:
\begin{align}
&\langle \psi, i[V,F_A]\psi\rangle=\langle \psi, iVF_A\psi\rangle -c.c.\\
&=\langle \psi, iVF_AG_A(|A|\leq M_0/2)\psi\rangle+\langle \psi, iVF_AG_A(A\leq -M_0/2)\psi\rangle\\
&=O(e^{-M_0/2})+c\langle p^2\psi, V|x|^4 F_A(A+i)^{-4}p^2\psi\rangle\\
&+c\langle \psi, \tilde{V} F_A(A+i)^{-4}\psi\rangle\\
&+O(e^{-M_0/2})\leq c\|\langle x\rangle^{-\sigma}p^2\psi\| \|\langle x\rangle^{-\sigma} F_A p^2\psi\|M_0^{-4} +O(e^{-M_0/2})
\end{align}

The integral over time is then controlled by
\begin{align}
&cTe^{-M_0/2}\\
&+cM_0^{-4}\int_1^T\| \langle x\rangle^{-\sigma}p^2\psi\|_{L_t^{\infty}L^2_x} \| \langle x\rangle^{-\sigma}F_Ap^2\psi\|_{L_t^{\infty}L^2_x} ds.
\end{align}

 If we now choose $M_{0} \sim t^{\alpha}$, we get the bound
$$
\begin{aligned}
& \sim\left(\int_{1}^{T} s^{-4 \alpha} d s\right)\left\|\langle x\rangle^{-\sigma} F_{A} p^{2} \psi\right\|_{L_{s}^{\infty} L_{x}^{2}}^2 \\
& \leq C
\end{aligned}
$$
where $C<\infty$, for
$$
\int_{1}^{T} s^{-4 \alpha} d s<\infty \quad(\alpha>1 / 4) .
$$

We conclude that (\eqref{income})
$$
\left\langle F_{A}\left(\frac{A+t^{\alpha}}{R} \leq-1\right) A^{2}\right\rangle \approx 1 .
$$

We conclude that if $|x| \geq 1$, then $\left.\left.\left\langle F_{A}\right| p\right|^{2}\right\rangle \leq 1$.
It remains to show that
$$
\left\|F_{A}\left(\frac{A+t^{\alpha}}{R} \geqslant-1\right) \Delta \psi(t)\right\|_{2} \sim 0(1)
$$
\end{proof}
\begin{remark}
Since we use an analytic PROB, we do not have symmetrization error terms. Hence, all error is of order $M_0^{-4}.$
Therefore, multiplying the entire estimate by $M_n^4$ and summing over $n$, we get a bound on $A^{5-0},$ which for $|x|>1$ also is a bound on the norm $H^{5/2-0}$. We see the expected extra smoothing in the region away from the Propagation Set.
\end{remark}
\section{Control of the solution on the Propagation Set}

High frequency and long-time parts of the solution of the free flow concentrate on the Propagation Set:
$$
P S_{E}=\left\{\left.\frac{x}{t}=2 p \right\rvert\, \quad p^{2}=E\right\}
$$
for the part of the solution localized at energy $E.$
Sharper localization of the solution is constructed by projection on the region $|x-2 p t| \leq t^{\alpha}$, $0 \leqslant \alpha<1$.

Consider the following $P R O B$ :

\begin{align}\label{HF}
& \partial_{t}\left\langle\psi(t),\left(H F_{c}+F_{c} H\right) \psi(t)\right\rangle=\left\langle\psi(t),\left(\frac{\partial V}{\partial t} F_c+F_{c} \frac{\partial V}{\partial t}\right) \psi(t)\right\rangle \\
& +\left\langle\psi(t),\left(H \frac{\partial F_{c}}{\partial t}+\frac{\partial F_{c}}{\partial t} H\right) \psi(t)\right\rangle \\
& +\left\langle\psi(t)\left[H i\left[V, F_{c}\right]+i\left[V, F_{c}\right] H\right] \psi(t)\right\rangle \\
& \text { Here } H=H_{0}+V(t)=p^{2}+V(t) \\
& F_{c}=F_{c}\left(\frac{|x-2 p t|}{t^{\alpha}} \leq 1\right)
\end{align}

\begin{proposition}
 The PROB mentioned above (\eqref{HF}) is uniformly bounded in time.
\end{proposition}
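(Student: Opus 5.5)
The plan is to integrate the identity \eqref{HF} from $1$ to $T$ and show that each of the three terms on its right hand side is either integrable in time or has a favorable sign. That gives $\sup_T\langle HF_c+F_cH\rangle_T\lesssim \langle HF_c+F_cH\rangle_1+C$, and the initial term is bounded by $\|\psi(0)\|_{H^2}$ through the $H^2$ local existence theory on $[0,1]$.

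\textbf{The $\partial_t V$ term.} I will use that $\partial_t V$ is localized, $\langle x\rangle^{\sigma}\partial_t V\in L^\infty$ with $\sigma>4$. On the support of $F_c$ we have $|x-2pt|\le t^\alpha$. For $|x|\lesssim 1$ this forces $|p|\lesssim t^{\alpha-1}+t^{-1}|x|$, so the region where the potential lives carries only low frequencies there.

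Concretely, I write $x=(x-2pt)+2pt$ and commute $\langle x\rangle^{-\sigma}$ through $F_c$. Each commutator $[x,F_c]$ gains a factor $t^{-\alpha}$, since $[x,x-2pt]=0$ and $[p,x-2pt]$ is $O(1)$. This gives
\[
\bigl|\langle \partial_tV\,F_c\rangle\bigr|\lesssim \|\langle x\rangle^{-\sigma/2}F_c\psi\|\,\|\psi\|+O(t^{-2\alpha}).
\]
For the localized factor I will use the free-flow local decay \eqref{LD1}, which says that $\langle x\rangle^{-2}\langle x-2p\tau\rangle^{-2}$ is bounded by $(\langle x\rangle^2+4p^2\tau^2)^{-1}$. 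On $\mathrm{Ran}\,F_c$ with $|x|$ bounded this yields a decay of order $t^{-2(1-\alpha)}$. That is integrable once $\alpha<1/2$, so I will fix $\alpha\in(0,1/2)$.

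\textbf{The $\partial_t F_c$ term.} Here I use the standard positivity of the propagation observable. The Heisenberg derivative of $|x-2pt|/t^\alpha$ under $H_0$ is $-\alpha|x-2pt|t^{-\alpha-1}\le 0$, because $x-2pt$ is conserved by the free flow. The full derivative $D_tF_c=\partial_tF_c+i[H_0,F_c]$ therefore equals $t^{-1}\alpha\,(\text{nonnegative})\,F_c'$ plus commutator corrections of order $t^{-1-\alpha}$.

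The paper's identity \eqref{HF} has already used $i[H_0,F_c]$ inside $\partial_tF_c$. So the term $\langle H\partial_tF_c+\partial_tF_cH\rangle$ is, after symmetrization, $-\tfrac{\alpha}{t}\langle p\,\tilde F_c'\,p\rangle$ with a sign (it is a monotonicity term) plus errors. The errors I bound by $t^{-1-\alpha}\langle p^2\rangle\lesssim t^{-1-\alpha+2/5}$, using the Proposition of Section 3 ($\langle p^2\rangle\le t^{2/5}$). This requires $\alpha>2/5$. Combined with the previous step, I take $\alpha\in(2/5,1/2)$.

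The signed term can only help. To bound the observable from above, that term must be $\le 0$ with the right orientation of the cutoff. If the orientation is wrong, I will move it to the left hand side as a propagation estimate, exactly as in Section 5.

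\textbf{The $H\,i[V,F_c]$ term — the main obstacle.} The term $\langle H\,i[V,F_c]+i[V,F_c]H\rangle$ is the one I expect to be hardest. The commutator has leading part $-2t^{1-\alpha}\nabla V\cdot F_c'$, which grows in $t$ and is multiplied by $H\sim p^2$. I will first localize $\nabla V$ with $\langle x\rangle^{-\sigma}$, $\sigma>4$.

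I will then split $\psi$ by the incoming/outgoing projections $P^\pm(A)$, $P^0(A)$.
\begin{itemize}
\item On $P^-(A)\psi$, Theorem \eqref{In-Reg} gives uniform $H^2$ bounds.
\item On the outgoing part near $|x|\lesssim1$, $F_c'$ requires $|x-2pt|\sim t^\alpha$, hence $|p|\sim t^{\alpha-1}$. Then $p^2$ contributes $t^{2\alpha-2}$, and together with the factor $t^{1-\alpha}$ the product is $O(t^{\alpha-1})$. This is still not integrable.
\end{itemize}
To close the outgoing case I will pair it with the local-decay gain $\langle x\rangle^{-2}\langle x-2pt\rangle^{-2}\lesssim t^{-2\alpha}$ from \eqref{LD1}, and use the local $H^2$ bound of Theorem \ref{Local1} on the $H$ side. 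That gives $t^{1-\alpha}\cdot t^{-2\alpha}\cdot t^{2\alpha-2}=t^{-1-\alpha}$, which is integrable.

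The delicate points are justifying the symbolic bookkeeping with honest commutator expansions, and checking that the $P^0(A)$ piece, which has bounded $A$, is handled by the exponential $e^{-M_0}$ smallness of Section 5.
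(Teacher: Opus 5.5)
Your overall plan (integrate \eqref{HF} and make each term integrable or signed) matches the paper's, but two steps fail.

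\textbf{The $\partial_t F_c$ term has the wrong sign.} This is the crux. The components of $x-2pt=e^{-itH_0}xe^{itH_0}$ commute, so $F_c=e^{-itH_0}F_c(|x|/t^{\alpha})e^{itH_0}$ exactly. Hence
\[
D_{H_0}F_c=\frac{\alpha}{t}\,G,\qquad G=e^{-itH_0}\,\frac{|x|}{t^{\alpha}}\Bigl|F_c'\Bigl(\frac{|x|}{t^{\alpha}}\Bigr)\Bigr|\,e^{itH_0}\geq 0,
\]
with no commutator corrections. Your restriction $\alpha>2/5$ is therefore an artifact: the symmetrization error $\frac{\alpha}{t}\sum_j[p_j,[p_j,G]]$ is $O(t^{-1-2\alpha})$ in norm. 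It follows that $\langle HD_{H_0}F_c+D_{H_0}F_cH\rangle=\frac{2\alpha}{t}\sum_j\langle p_j\psi,G\,p_j\psi\rangle+\dots$ is \emph{nonnegative}. The region $|x-2pt|\le t^{\alpha}$ expands, and this term pushes the observable up. It is $t^{-1}$ times the kinetic energy on the edge of the propagation set. With $\langle p^2\rangle\lesssim t^{2/5}$ it is only $O(t^{-3/5})$; even a uniform energy bound would give $t^{-1}$, which is not integrable. Nothing in your argument shows the edge energy is integrable against $dt/t$.

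The Section 5 device of moving the monotone term to the left works only when that term is nonpositive. A nonnegative term cannot be transferred to yield an upper bound on $\langle HF_c+F_cH\rangle_T$. You also cannot flip the orientation, since the statement fixes $F_c$ as the inner cutoff. The missing idea is the paper's: freeze the scale. Use $F_c(|x-2pt|/s\le 1)$ with $s$ independent of $t$ on the whole interval of integration ($s=T^{\alpha}$ when estimating at time $T$). Then $F_c$ is a function of the free-flow invariant $x-2pt$, so $D_{H_0}F_c\equiv 0$ and the second term of \eqref{HF} vanishes identically. At $t=T$ the observable is exactly the one in the statement, and at $t=0$ it is controlled by the data uniformly in $T$.

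\textbf{The potential commutator term does not close.} Expanding $i[V,F_c]\approx 2t^{1-\alpha}\nabla V\cdot\nabla F_c$ manufactures a growing factor, and the gains you then multiply are not independent.
\begin{itemize}
\item To apply \eqref{LD1} to $F_c'$ you must reinsert $\langle x-2pt\rangle^{2}F_c'$, of size $t^{2\alpha}$, which cancels the $t^{-2\alpha}$. On $|p|\lesssim t^{\alpha-1}$ near the origin, \eqref{LD1} gives no decay at all, which also undercuts your $\partial_tV$ step.
\item Once Theorem~\ref{Local1} is spent on $\langle x\rangle^{-\sigma}H\psi$, you cannot also claim $p^2\sim t^{2\alpha-2}$. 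Multiplication by the localized $\nabla V$ spreads momenta by $O(1)$ anyway.
\end{itemize}
You are left with at best your own count $t^{\alpha-1}$, which is not integrable.

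The $P^{\pm}(A)$ splitting does not repair this. A uniform $H^2$ bound on $P^-(A)\psi$ gives boundedness, not integrability in $t$. $P^{\pm}(A)$ do not commute with $H$, $V$, $F_c$, so cross terms appear. The $e^{-M_0}$ smallness of Section 5 came from a product of two disjoint spectral cutoffs in $A$, which is absent here.

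The paper never produces the factor $t^{1-\alpha}$. It keeps the potential terms in localized form, $\langle\langle x\rangle^{-\sigma}F_c\rangle$ and $\langle p^2\tilde V\tilde F_c\rangle+\mathrm{c.c.}$. It takes time decay from the lemma $\langle x\rangle^{-2}F_c\lesssim(1+4p^2t^2)^{-1}\bigl[1+s/\langle x\rangle+s^2/\langle x\rangle^2\bigr]F_c$, together with $\langle x\rangle\gtrsim t$ on the high-frequency part of $\mathrm{supp}\,F_c$. It controls the remaining $p^2$ via $4p^2=(4p^2-x^2/t^2)+x^2/t^2$ and $|2pt-x|\le s$.
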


\begin{proof}
First, we use $F_{c}=F_{c}\left(\frac{|x-2 p t|}{s} \leq 1\right.)$

Then, the first term on the right of \eqref{HF} is of the form
$$
\sim\left\langle\langle x\rangle^{-\sigma} F_{c}\right\rangle+c \cdot c .
$$

The second term is zero:
$$
D_t F_{c}=i\left[-\Delta, F_{c}\right]+\frac{\partial F_{c}}{\partial t}=0 \quad \textbf{since} \, D_t|x-2pt|=0.
$$

The third term is of the form
$$
\left\langle p^{2} \tilde{V} \tilde{F}_{c}\right\rangle+c.c.
$$

So, we need to prove the integrability in time of the above term (depending on $s$ ).

For $\sigma=2$, we get

\begin{lemma}

$$
\langle x\rangle^{-2}\langle x-2 p t\rangle^{2} F_{c} \lesssim c F_{c}+O\left(\frac{s}{\langle x\rangle} F_{c}\right)+O\left(\frac{s^{2}}{\langle x\rangle^{2}} F_{c}\right)
$$

Hence,
$$
\begin{aligned}
&\langle x\rangle^{-2}\left(4 p^{2} t^{2}\right) F_{c} \leq\langle x\rangle^{-2}\left(4 p^{2} t^{2}-x^{2}+x^{2}\right) F_{c} \\
& \leq F_{c}+\langle x\rangle^{-2}\left((2 p t-x) \cdot(2 p t+x)+x^{2}-2 p t \cdot x\right. \\
&-x \cdot 2 p t) F_{c} \\
& \leq F_{c}\langle x\rangle^{-2}\left(s(2 p t-x)+2|x| s+x^{2} c+|x| s\right) F_{c} \\
& \leq F_{c}\left[\left(s^{2} /\langle x)^{2}\right)+s /\langle x\rangle+1\right]
\end{aligned}
$$

So,
$$
\begin{aligned}
& \langle x\rangle^{-2} F_{c}=\langle x\rangle^{-2}\left(1+4 p^{2} t^{2}\right)^{-1}\left(1+4 p^{2} t^{2}\right) F_{c} \\
& =\left(1+4 p^{2} t^{2}\right)^{-1}\langle x\rangle^{-2}\left(1+4 p^{2} t^{2}\right) F_{c} \\
& \quad-\langle x\rangle^{-2}\left[\langle x\rangle^{2},\left(1+4 p^{2} t^{2}\right)^{-1}\right]\langle x\rangle^{-2}\left(1+4 p^{2} t^{2}\right) F_{c}
\end{aligned}
$$

The first term on the RHS is bounded by using $A=x\cdot (p-x/(2s))+x^2/(2s)$ and using 
$$
\begin{aligned}
& \left( 1+4 p^{2} t^{2}=1+|x-2 p t|^{2}-2 x^{2}+8 A\right) \\
& \left(1+4 p^{2} t^{2}\right)^{-1}\left[2+\frac{s}{\langle x\rangle}+\frac{s^{2}}{\langle x\rangle^{2}}\right] F_{c}
\end{aligned}
$$

The second term is also bounded by
$$
\left(1+4 p^{2} t^{2}\right)^{-1}\left[1+\frac{s}{\langle x\rangle}+\frac{s^{2}}{\langle x\rangle^{2}}\right] F_{c}
$$
since
$$
\begin{gathered}
\langle x\rangle^{-2}\left[x^{2},\left(1+4 p^{2} t^{2}\right)^{-1}\right]\langle x\rangle^{-2}= \\
\langle x\rangle^{-2}\left(1+4 p^{2} t^{2}\right)^{-1}\left[x^{2}, 4 p^{2} t^{2}\right]\left(1+4 p^{2} t^{2}\right)^{-1}\langle x\rangle^{-2} \\
-i\left[x, 4 p^{2} t^{2}\right]=8 p t^{2}-\left[x\left[x, 4 p^{2} t^{2}\right]\right]=8 t^{2}
\end{gathered}
$$

Then
$$
\langle x\rangle^{-2}\left(1+4 p^{2} t^{2}\right)^{-1}\left(x \cdot p t^{2}+t^{2}\right)\left(1+4 p^{2} t^{2}\right)^{-1}\langle x\rangle^{-2}
$$

We need to control $p t^{2}$ and $t^{2}$.
$$
\begin{aligned}
& p t^{2}\left(1+4 p^{2} t^{2}\right)^{-1}\langle x\rangle^{-2}= \\
& \quad p t^{2}\left(1+4 p^{2} t^{2}\right)^{-1}|p| \frac{1}{|p|^{2}}\langle x\rangle^{2}=O(1) \\
& \langle x\rangle^{-2}\left(1+4 p^{2} t^{2}\right)^{-1}t^{2}\left(1+4 p^{2} t^{2}\right)^{-1}\langle x\rangle^{-2} \\
& =\langle x\rangle^{-2}|p|^{-1}|p|\left(1+4 p^{2} t^{2}\right)^{-1} t^{2}\left(1+4 p^{2} t^{2}\right)^{-1}|p| \frac{1}{|p|}\langle x)^{-2} \\
& =\langle x\rangle^{-1}O(1)\left(1+4 p^{2} t^{2}\right)^{-1}\langle x\rangle^{-1} .
\end{aligned}
$$
\end{lemma}
Next, we observe that for $|p| \geqslant 1$
$$
\begin{gathered}
|x-2 p t| \leq s \\
\text { with } s \equiv t^{\alpha}, \alpha \leq 1
\end{gathered}
$$
imply, at the symbol level, that for $t \gg t_{0}$,
$\langle x\rangle \gtrsim t$.
Hence, we expect, as we have used before, that $\quad\langle x\rangle^{-1} s F_{c}$ and $\langle x\rangle^{-2} s^{2} F_{c}$
with $t$ large, and $s \sim t^{\alpha}$, that these expressions are bounded.
To conclude
$$
\begin{aligned}
& \left\langle H(t) F_{c}+F_{c} H(t)\right\rangle_{T} \lesssim\left\langle H(0) F_{c}+F_{c} H(0)\right\rangle \\
& \quad+\int_{0}^{T}\left(1+t^{-2} O(1)\left\{\left\langle \psi, p^{2}\langle x)^{-\sigma+2}\langle x\rangle^{-2} \tilde{F}_{c} \psi\right\rangle+c . c\right\} d t\right.
\end{aligned}
$$

Finally, we write the $p^{2}$ factor as
$$
\begin{aligned}
& \left(4 p^{2}-\frac{x^{2}}{t^{2}}\right)+\frac{x^{2}}{t^{2}} \\
& 4 p^{2}-\frac{x^{2}}{t^{2}}=(2 p-x / t) \cdot(2 p-x / t+2 x / t)= \\
& =[2 p t-x) \cdot(2 p t-x)+(2 p t-x) \cdot 2 x] \frac{1}{t^{2}}
\end{aligned}
$$
we then bound the $(2 p t-x)$ factors by $s$, so the overall bound is (on support $\langle x\rangle^{-\sigma+}F_{c}$ )
$$
s^{2} / t^{2}+\frac{s\langle x\rangle}{t^{2}} \leqslant \frac{s^{2}}{t^{2}}+\frac{s}{t^{2}}
$$

So, by our choice of $s=T^{\alpha}$, we conclude that
$$
\left\langle H(T) F_{c}(T)+F_{c}(T) H(T)\right\rangle \stackrel{\varepsilon}{\sim}\left\langle H(0) F_{c}+F_{c} H(0)\right\rangle .
$$

\end{proof}

\section{Away From the Propagation Set}
\begin{theorem}[Away from PS]
We have the following bound away from the PS defined as $|x-2pt|\leq t^{\alpha}:$

\begin{align}
& \left\langle\left(\frac{A}{t^{\beta}}\right)^{l+1} \ln \frac{\langle A\rangle}{t^{\beta}} F_{A, C}\left(\frac{\left|2 p^{2} t-A\right|}{t^{\alpha}} \geqslant 1\right)\right\rangle_{T} \\
& \lesssim\langle\cdots\rangle_{1}+o(1) .
\end{align}
\end{theorem}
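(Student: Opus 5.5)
The plan is to treat the weighted quantity in the statement as a propagation observable,
\[
B(t)=\Big(\frac{A}{t^{\beta}}\Big)^{l+1}\ln\frac{\langle A\rangle}{t^{\beta}}\,F_{A,C}\Big(\frac{|2p^2t-A|}{t^{\alpha}}\geq 1\Big),
\]
symmetrized where necessary. We compute its Heisenberg derivative $D_tB=\partial_tB+i[H_0+V(t),B]$ and show that, up to terms integrable in time, it has a definite (non-positive) sign.

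The key algebraic input is the free identity $i[H_0,A]=2p^2$. It gives $D_t(A-2p^2t)=0$, so, exactly as in the proof that the second term of \eqref{HF} vanishes, $D_0F_{A,C}$ only sees the explicit $t$-dependence through $t^{\alpha}$. That contribution is $-\frac{\alpha}{t}\frac{|2p^2t-A|}{t^{\alpha}}\tilde F'_{A,C}\le 0$ on the support. Similarly $D_0(A/t^{\beta})=(2p^2t-\beta A)/t^{1+\beta}$.

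\textbf{Free part.} Away from the propagation set we have $|A-2p^2t|\ge t^{\alpha}$, and we split into two regions.
\begin{itemize}
\item On the incoming region $A<0$, the term $A^l\cdot 2p^2$ has the sign of $A^l$. We choose $l$ even, so that together with the factor $A$ in $A^{l+1}$ the leading term is negative; this is the same mechanism as in the section on incoming waves, where $\partial_t\langle F(\frac{A+M}{R}\le -1)\rangle$ had a negative principal part.
\item On the outgoing region, the cutoff forces either $A\ge 2p^2t+t^{\alpha}$ or $A\le 2p^2t-t^{\alpha}$. We use the time weights $t^{-\beta}$ to make $-\beta A/t^{1+\beta}$ dominate $2p^2/t^{\beta}$ when $A\gg p^2t$. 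In the remaining case we use the derivative of $F_{A,C}$.
\end{itemize}
The logarithm plays the same role as in the dyadic summation with weights $2^{kn}R_0^k/n$: differentiating it produces $\ln^{-1}$ factors, and this is what allows summation over the dyadic pieces of $A$ without loss. I would first prove the estimate for a single dyadic shell $A\sim 2^nt^{\beta}$ and then sum, as in the ``Upgrading the Estimates'' step, using $\sum_n 2^{kn}F_n\simeq A^kF\ln|A|$.

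\textbf{Potential part.} The term $i[V,B]$ is handled as in the incoming section. We write $V=\langle x\rangle^{-\sigma}\tilde V\langle x\rangle^{-\sigma}$ and use the commutator expansion (or formula \eqref{P-} if $V$ is dilation analytic). This converts each power of $A$ into $x\cdot\nabla\tilde V$ times bounded factors, with the weight localized in $x$. Two inputs then control this term:
\begin{itemize}
\item the Local Smoothness Theorem~\ref{Local1}, which bounds $\|\langle x\rangle^{-\sigma}\Delta\psi\|$ uniformly;
\item the observation that on the support, $A\gtrsim t^{\beta}$ while $\langle x\rangle$ is localized, which gains factors $t^{-\beta}$ through $F(A)(A+i)^{-k}\lesssim t^{-k\beta}F$.
\end{itemize}
Choosing $k$ with $k\beta>1$, which requires $\sigma$ large enough, makes the potential contribution $O(t^{-1-\epsilon})$ and hence integrable. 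This yields the $o(1)$ as the initial time $1\to\infty$.

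\textbf{Main obstacle.} The main obstacle is that $A$ and $p^2$ do not commute, and the cutoff $F_{A,C}(|2p^2t-A|/t^{\alpha})$ is a function of the non-commuting combination $A-2p^2t$. Commutators with $V$ therefore produce off-diagonal errors that are not localized to the same region. I would first try to exploit $e^{-iH_0t}Ae^{iH_0t}=A-2p^2t$: conjugating by the free flow reduces $F_{A,C}$ to a function of $A$ alone. The analytic structure in \eqref{P-}, which has no symmetrization errors, would then control these commutators. The remaining errors would be bounded using the previous Proposition's $t^{1/5}$ bound on $\|p\psi\|$ and the boundedness of \eqref{HF}.
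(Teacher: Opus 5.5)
\textbf{Gap 1: the sign of the free part behind the propagation set.} Your free-part sign argument fails on the part of the region that lies behind the propagation set. Take the slab $0<A\le 2p^2t-t^{\alpha}$ (outgoing but behind the PS). There
\[
2p^2t-\beta A=\beta(2p^2t-A)+(1-\beta)2p^2t\ge \beta t^{\alpha},
\]
so $D_0(A/t^{\beta})\ge\beta t^{\alpha-1-\beta}>0$. Hence the leading term $(l+1)(A/t^{\beta})^{l}D_0(A/t^{\beta})\ln(\cdots)F_{A,C}$ of the derivative of your observable is positive, for every parity of $l$ and every $\beta\in(0,1]$. When $\beta<1$ the same happens ahead of the PS, on $2p^2t+t^{\alpha}\le A<2p^2t/\beta$.

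Your fallback, ``in the remaining case we use the derivative of $F_{A,C}$,'' cannot compensate. The term $D_0F_{A,C}=-\frac{\alpha}{t}\frac{|2p^2t-A|}{t^{\alpha}}\tilde F'_{A,C}$ lives only in the boundary layer $|2p^2t-A|\sim t^{\alpha}$ and is absent in the interior of these slabs. Nothing else in your plan controls the positive term there.

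Your parity bookkeeping on $A<0$ is also reversed. For $l$ even, $(l+1)(A/t^{\beta})^{l}(2p^2t-\beta A)/t^{1+\beta}\ge 0$; the factor $A$ you invoke belongs to the observable, not to its derivative. The good sign there comes from $l$ odd, i.e.\ from the weight $|A|^{l+1}$.

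A two-sided dyadic shell in $A$ does not help either, since its derivative has both signs. The paper avoids all of this by matching the monotonicity of the $A$-cutoff to the sign of $2p^2t-A$:
\begin{itemize}
\item Behind the PS it uses $B_M=F_A(\frac{A}{tM}\le\frac12)F_{A,C}(\frac{2p^2t-A}{t^{\alpha}}\ge 1)+\mathrm{c.c.}$, whose $A$-cutoff is decreasing. Then $D_{H_0}F_A=\tilde F'_A\,\frac{2p^2t-A}{Mt^{2}}\le 0$ on the support of $F_{A,C}$.
\item Ahead of the PS it uses $B^-_M=F_A(\frac{A}{tM}\ge\frac12)F_{A,C}(\frac{A-2p^2t}{t^{\alpha}}\ge 1)+\mathrm{c.c.}$. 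There $2p^2t-A\le -t^{\alpha}$ makes the increasing cutoff favorable at scale $t$.
\end{itemize}
The weights $(A/t)^{l}$ and the logarithm then come from multiplying these one-sided PROBs by $M^{k}$ and summing over $M=M_02^n$, not from differentiating a global weight.

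\textbf{Gap 2: the symmetrization error.} You flag this term as the main obstacle but give no workable estimate for it. In the paper it is the core step. The product of the two non-commuting cutoffs is rewritten via (B.5) as $2\sqrt{\tilde F'_A}\,G_{A,C}\sqrt{\tilde F'_A}$ plus a double commutator. That double commutator is computed with the commutator expansion (B.7)--(B.9). The resulting remainder (B.10) is bounded using the Section~3 estimate $\langle p^2\rangle\lesssim t^{2/5}$, giving $M^{-3}s^{-13/5}$, which is integrable in time and still summable after multiplying by $M^{3}$.

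Your substitute does not do this job. Conjugating by $e^{-iH_0t}$ makes $F_{A,C}$ a function of $A$ alone, but it turns your weight into a function of $A+2p^2t$, so the non-commutativity is only moved. Formula \eqref{P-} computes commutators of the $\tanh$ projections $P^{\pm}(A)$ with dilation-analytic operators; it says nothing about the symmetrization error of a product of two non-commuting cutoffs.

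A smaller point: your potential estimate assumes $A\gtrsim t^{\beta}$ on the support. That holds only after inserting a cutoff such as $\tilde F(A/t^{\beta}\ge M_0)$, as the paper does, and not for the observable in the statement.
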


\begin{proof}
In this case, we first consider the following PROB:

\begin{equation*}
B_{M} \equiv F_{A}\left(\frac{A}{tM} \leq \frac{1}{2}\right) F_{A, C} \left( \frac{2 p^{2} t-A}{t^{\alpha}} \geqslant 1\right)+c . c \tag{B.1}
\end{equation*}

Note that

\begin{equation}
F_{A, C}=e^{+i \Delta t} F_{A}\left(-\frac{A}{t^{\alpha}} \geqslant 1\right) e^{-i \Delta t} . \tag{B.2}
\end{equation}

The Heisenberg Derivative w.r.t. $-\Delta$ is negative to leading order:

\begin{equation*}
D_{H_{0}} F_{A}=\tilde{F}_{A}^{\prime}\left(\frac{2 p^{2}}{Mt}-\frac{A}{Mt \cdot t}\right) \tag{B.3}
\end{equation*}

This is an identity, since we use an analytic form of $F_{A}$.
On the support of $F_{A, C}$ this derivative is
$$
\leqslant \tilde{F}_{A}^{\prime} \frac{t^{\alpha}}{M t} F_{A,C}+ c . c .
$$
to leading order.
The Heisenberg derivative of $F_{A, C}$ is also negative:

\begin{equation*}
D_{H_{0}} F_{A, C}=\partial_{t} F_{A,C}=-\frac{\alpha}{t} F_{A,C}^{\prime} . \tag{B.4}
\end{equation*}

The commutator with the potential is also fast decaying, since on the boundary $\tilde{F}_{A}^{\prime}$, $A \sim M t$, and on boundary of $\tilde{F}_{A, C}^{\prime}$, we are on the Propagation Set, and $|x| \sim 2 p t.$
Now, we note that far away from the PS (prop.  set)
$$
\frac{2 p^{2}}{Mt } \gg \frac{A}{M^2t},
$$
showing that away from the $P S$, we get a good bound on $p^{2}$.
We want to improve this to $p^{4}$, by finding the decay on $M$ of the higher order terms. One such term comes from the potential. With sufficient decay in $|x|$ large, these terms are higher order.
The main contribution comes from Symmetrization.

The leading order commutator is negative modulo correction terms coming from symmetrizing.
We use, for $A, B$ non-negative operators

\begin{align*}
A B+B A=2 A^{1 / 2} B A^{1 / 2}+A^{1 / 2}\left[A^{1 / 2}, B\right] \\
+\left[B, A^{1 / 2}\right] A^{1 / 2}=2 A^{1 / 2} B A^{1 / 2}+\left[A^{1 / 2},\left[A^{1 / 2}, B\right]\right] . \tag{B.5}
\end{align*}

Using eq. (B.5), we get

\begin{align*}
& \tilde{F}_{A}^{\prime}\left(2 p^{2}t-A\right) F_{A, C}+F_{A, C}\left(2 p^{2}t-A\right) \tilde{F}_{A}^{\prime} \\
& =2 \sqrt{\tilde{F}_{A}^{\prime}}\left[\left(2 p^{2}t-A\right) F_{A, C}\right] \sqrt{\tilde{F}_{A}^{\prime}}+ \\
& {\left[\sqrt{\tilde{F}_{A}^{\prime}},\left[\sqrt{\tilde{F}_{A}^{\prime}}, G_{A, C}\left(2 p^{2} t-A\right)\right]\right] \text { (B.6a) }} \\
& G_{A, C}=\left(2 p^{2} t-A\right) \cdot F_{A, C} \quad(B .6 b) \tag{B.6}
\end{align*}

By the commutator expansion formula, we have

\begin{align*}
& {\left[f\left(\frac{A}{t}\right), g\cdot\left(\frac{2 p^{2} t-A}{t^{\alpha}}\right) \cdot\left(2 p^{2} t-A\right)\right]}  \tag{B.7}\\
& =\frac{1}{t} f^{\prime}\left(\frac{A}{t}\right)\left[A, g \cdot\left(2 p^{2} t-A\right)\right]+ \text { h.o.t. } \\
& =\frac{1}{t} f^{\prime}\left(\frac{A}{t}\right)\left\{g \cdot 4 p^{2} t+g^{\prime} \frac{4 p^{2} t}{t^{\alpha}} \cdot\left(2 p^{2} t-A\right)\right\}+\text { h.o.t. } \\
& \approx \frac{1}{t} f^{\prime}\left(\frac{A}{t}\right)\left\{g\left(4 p^{2} t-2 A\right)+g \cdot 2A+g^{\prime} \cdot\left\{t^{\alpha}-2 A\right\}\right\}+ \text { h.o.t. }
\end{align*}

The RHS of (B.7) has a leading order term

\begin{equation*}
\frac{1}{t} f^{\prime}\left(\frac{A}{t}\right)\left(4 p^{2} t-2 A\right) g\left(-\frac{A+2 p^{2} t}{t^{\alpha}} \geqslant 1\right) . \tag{B.8}
\end{equation*}

The other two terms are of order 1 or lower in $t$, since $\frac{A}{t} f^{\prime}\left(\frac{A}{t}\right)$ is bounded.

Commuting

\begin{align*}
& {\left[\sqrt{F_{A}^{\prime}}, f^{\prime}\left(\frac{A}{t}\right)\left(2 p^{2} t-A\right) g\left(\frac{2 p^{2} t-A}{t^{\alpha}} \geqslant 1\right)\right]} \\
& =\frac{1}{t} f^{\prime}\left(\frac{A}{t}\right)^{2}\left[\left(4 p^{2} t\right) g+\frac{4 p^{2} t}{t^{\alpha}} g^{\prime}\left(2 p^{2} t-A\right)\right]+h .o . t . \tag{B.9}
\end{align*}

Here, $f\left(\frac{A}{t}\right) \equiv \sqrt{\tilde{F}_{A}^{\prime}} . \quad g\left(\frac{2 p^{2} t-A}{t^{\alpha}}\right) \equiv F_{A, C}$.
Writing $4 p^{2} t$ as $4 p^{2} t-2 A+2 A$, then the $g^{\prime}$ factor in B.9 is bounded by
$$
2 g^{\prime}+2 A g^{\prime} / t^{\alpha} .
$$

So, the leading term is
$$
\frac{1}{t} f^{\prime}\left(\frac{A}{t}\right)^{2} 4 p^{2} t g .
$$

The leading order remainder term, from B.7 and B.3 is therefore:

\begin{align*}
& \frac{1}{M^2 t^{2}} f^{\prime}(A / t)^{2} 4 p^{2} t g\left(\frac{2 p^{2} t-A}{t^{\alpha}} \geqslant 1\right)= \\
& \frac{1}{M t^{2}} \cdot \frac{1}{M^{2} t^{2}} f^{\prime}\left(\frac{A}{M t}=1\right)^{2} 4 p^{2} t g . \tag{B. 10}
\end{align*}

The leading order term on the RHS is

\begin{equation*}
\frac{1}{t M} \cdot \sqrt{\tilde{F}_{A}^{\prime} \,\left(\frac{A}{t \mu}=1\right)}\left(2 p^{2} t-A\right) F_{A, C} \sqrt{\tilde{F}_{A}^{\prime}\,\left(\frac{A}{t \mu}=1\right) }. \tag{B.11}
\end{equation*}

Since we proved before that $\left\langle p^{2} F\left(|p|>t^{1 / 5}\right)\right\rangle$ is bounded in time, it follows that the leading remainder term, B.10, is bounded by
$$
\left\langle\frac{1}{M^{3}} \frac{1}{t^{3}} f^{2} g t^{2/5}\right\rangle \in L^{1}(d t).
$$

In particular, we have the bound
$$
\int^{\top}\langle B. 10\rangle d s \leqslant \int^{\top}\left\langle\frac{1}{M^{3}} f^{\prime 2}\left(\frac{A}{M s} \sim 1\right) g\right\rangle_s \frac{d s}{s^{13/5}} .
$$

Multiply by $M, M^{2}$ or $M^{3}$, where
$$
M=M_{0} 2^{n} \quad n=0,1,2, \ldots
$$
and sum over $n,$
we get the following bounds:
$$
\begin{aligned}
& \int_{1}^{T} \frac{d t}{t}\left(\frac{A}{t}\right)^{l}\left\langle\tilde{F}\left(\frac{A}{t} \geqslant M_{0}\right)\left(2 p^{2}-\frac{A}{t}\right) F_{A, C} \tilde{F}_{A}\left(\frac{A}{t} \geqslant M_{0}\right)\right\rangle_{t}+ \\
& +\left\langle\left(\frac{A}{t}\right)^{l+1} \ln \left\langle\frac{A}{t}\right\rangle F_{A, C}+c. c.\right\rangle_{T}-\left\langle\left(\frac{A}{t}\right)^{l} \ln \langle A\rangle F_{A, C}\right\rangle_{1} \\
& \leq\left\langle\int_{1}^{T}\left\langle \bar F_{A}\left(\frac{A}{M_{0} s} \geqslant 1\right) F_{A, C}\right\rangle \frac{d s}{s^{13 / 5}}+\text { h.o.t. } \lesssim O(1) .\right. \\
& l=1,2 .
\end{aligned}
$$

\begin{remark}

The leading error term has decay in time that is more than needed for convergence. Therefore, we can redo the estimate, using
$$
B_{A, \beta} \equiv F_{A}\left(\frac{A}{Mt^{\beta}} \leqslant \frac{1}{2}\right) F_{A, c}\left(\frac{2 p^{2}-\beta A}{t^{\alpha}} \geqslant 1\right)+c \cdot c .
$$

The estimates we obtain show that on support $F_{A, C},  \left(\frac{A}{t}\right)^{2}$ and $\left(\frac{A}{t^{d}}\right)^{2}$ are uniformly bounded, respectively.

It also follows that
$$
\int_{1}^{T}\left(\frac{A}{t^{\beta}}\right)^{2}\left\langle\tilde{F}_{A}\left(\frac{A}{t^{\beta}} \geqslant M_{0}\right) 2 p^{2} F_{A, C} \tilde{F}_{A}\right\rangle_{t} \frac{d t}{t^{\beta}}<\infty
$$
\end{remark}

Next, we note that the region $A-2 p^{2} t \geqslant t^{\alpha}$, which is also away from the PS, can be treated similarly:
In this case we use
$$
B_{M}^{-} \equiv F_{A}\left(\frac{A}{t M} \geqslant \frac{1}{2}\right) F_{A,C}\left(\frac{A-2 p^{2} t}{t^{\alpha}} \geqslant 1\right)+\text { c.c. }
$$
and notice that $B_{M}^{-} \geqslant 0+$ h.o.t. and the leading order Heisenberg derivative of $B_{M}$ is negative.
$$
\begin{aligned}
& D_{H_{0}} F_{A, C}=-\frac{\alpha}{t} F_{A, C}^{\prime} \leqslant 0 \\
& D_{H_{0}} F_{A}=\tilde{F}_{A}^{\prime}\left[p^{2}(Mt )^{-1}-\frac{A}{Mt^{2}}\right],
\end{aligned}
$$
which is also negative on the support of $F_{A, C}$. Similarly with $A / t M$ replaced by $A / t^{\beta} M$, $\beta<1$ not too small.
The potential commutator is shown, as before, to have decay in the powers of $M$ and $t$.

The symmetrization works as before, as these operators are the same as before up to the sign change.
We conclude in particular that
$$
\begin{gathered}
\int_{1}^{T} \frac{d t}{t}\left\langle\left(\frac{A}{t^{\beta}}\right)^{l} \cdot \tilde{F}_{A}\left(\frac{A}{t^{\beta}} \geqslant M_{0}\right) p^{2} F_{A, C}\left(\frac{\left|2 p^{2} t-A\right|}{t^{\alpha}} \geqslant 1\right) \tilde{F}_{A}\left(\frac{A}{t^{\beta}} \geqslant M_{0}\right)\right\rangle_{t} \\
\leqslant\langle\cdots\rangle_{t=1}+O(1) . \quad l=0,1,2 .
\end{gathered}
$$

Furthermore, we have

\begin{align}
& \left\langle\left(\frac{A}{t^{\beta}}\right)^{l+1} \ln \frac{\langle A\rangle}{t^{\beta}} F_{A, C}\left(\frac{\left|2 p^{2} t-A\right|}{t^{\alpha}} \geqslant 1\right)\right\rangle_{T} \\
& \lesssim\langle\cdots\rangle_{1}+o(1) .
\end{align}

\end{proof}

\section{$H^{2}$ - Regularity on the Outgoing Waves}

\begin{theorem}

Let $\psi(t)$ be a solution of the SE as considered above.
Assume the initial condition is well localized in space and in $H^{2}\left(R^{3}\right)$.

Then,\newline
a) If $\sup _{t}\langle\phi(t), H(t) \phi(t)\rangle \lesssim O(1)$
where $\phi(t)=F\left(\frac{A}{t^{\alpha}} \geqslant 1\right) \psi(t)$,
then

\begin{equation}
\int_{1}^{T}\left\langle\Delta \psi(t),\left[\left\langle\frac{A}{t^{\alpha}}\right\rangle \ln \left\langle A / t^{\alpha}\right\rangle\right]^{-1} \Delta \psi(t)\right\rangle \frac{d t}{t^{\alpha}} \leq O(1) \tag{C.2}
\end{equation}

b) If for suitably localized $G(A)$ \begin{equation}
\sup_t \langle \nabla\phi(t),G(A)\langle\frac{A}{t^{\alpha}}\rangle\ln{\langle\frac{A}{t^{\alpha}}\rangle}  \nabla \phi(t)\rangle\leq O(1) \tag{C.3a}\end{equation}
then \begin{equation}
\int_{1}^{T} \frac{d t}{t}\left\langle\Delta \psi(t), G(A) F\left(\frac{A}{t^{\alpha}} \geqslant 1\right) \Delta \psi(t)\right\rangle \leq O(1). \tag{C.3b}\end{equation}  
\end{theorem}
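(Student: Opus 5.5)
We prove both parts with the same device used throughout the paper: a propagation observable (PROB) whose Heisenberg derivative has a sign to leading order, applied here to the $H^1$-level quantity $\nabla\psi$. Each time we commute the PROB through $p^2$ we gain one derivative on average in time. The dilation generator satisfies $i[p^2,A]=2p^2$, so a function of $A/t^{\alpha}$ sandwiched between $p$'s produces $p^2\cdot p^2=\Delta^2$ terms in its derivative.

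\textbf{Part (a).} Take the PROB
\[
B_1(t)=p\,\Phi\!\left(\tfrac{A}{t^{\alpha}}\right)p,
\]
where $\Phi$ is a bounded increasing function. Concretely, $\Phi(\lambda)=\int_{-\infty}^{\lambda}\big(\langle\mu\rangle\ln\langle\mu\rangle\big)^{-1}\tilde F(\mu\ge 1)\,d\mu$. This $\Phi$ is bounded precisely because $\int^\infty d\mu/(\mu\ln\mu)$ diverges only logarithmically; to make it bounded we use the dyadic sum over $F_n$ with weights $1/n$, exactly as in the ``Upgrading the Estimates'' step of the incoming-wave section.

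Then $\langle B_1\rangle_t\lesssim\langle\phi,p^2\phi\rangle$, which is $O(1)$ by the hypothesis of (a) together with the $O(\langle V\rangle)$ correction. The Heisenberg derivative with respect to $H_0$ is
\[
D_{H_0}B_1=p\,\Phi'\!\left(\tfrac{A}{t^{\alpha}}\right)\left(\tfrac{2p^2}{t^{\alpha}}-\tfrac{\alpha A}{t^{1+\alpha}}\right)p+\text{h.o.t.}
\]
The first term, after symmetrizing by (B.5), is $\frac{2}{t^\alpha}\,p^2\sqrt{\Phi'}\sqrt{\Phi'}p^2$, which is the integrand of (C.2). The second term is $O(t^{-1})\,p\,\Phi'(A/t^\alpha)(A/t^{\alpha})\,p$; since $\lambda\Phi'(\lambda)$ is bounded by $1/\ln\langle\lambda\rangle$, it is controlled by $t^{-1}\langle p^2\rangle$ on the support of $F$. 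For this term we use the $t^{2/5}$ bound of the Control for Large $x$ proposition, or better, the hypothesis of (a) together with the averaged bounds of the Away from PS theorem.

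The potential contributes $i[V,B_1]$. We treat it as in the incoming section: write $V=\langle x\rangle^{-\sigma}\tilde V\langle x\rangle^{-\sigma}$, so that every term carries a factor $\langle x\rangle^{-\sigma}p^2\psi$. This factor is uniformly bounded by the Local Smoothness theorem, and the commutator expansion supplies an extra $t^{-\alpha}$. Integrating from $1$ to $T$ and taking the symmetrization remainders $[\sqrt{\Phi'},[\sqrt{\Phi'},\cdot]]=O(t^{-2\alpha})p^2$ as $L^1(dt)$ errors gives (C.2).

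\textbf{Part (b).} Use $B_2=p\,G(A)\,\Phi_2(A/t^{\alpha})\,p$ with $\Phi_2(\lambda)=\langle\lambda\rangle\ln\langle\lambda\rangle\,F(\lambda\ge1)$. Its expectation is bounded by hypothesis (C.3a). Its derivative contains the positive term $t^{-\alpha}p\,G\,\Phi_2'\,p^2\ldots$, and $\Phi_2'\gtrsim 1$ on the support of $F$. Since $A\ge t^{\alpha}$ there, this term dominates $t^{-1}\,p^2G F p^2$, using $t^{-\alpha}\ge t^{-1}$ directly. The negative term $-\alpha t^{-1}(A/t^\alpha)\Phi_2'$ is not integrable by itself. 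We absorb it using $2p^2t-A\ge 0$ off the incoming region, which is the content of the Away from PS theorem and the region $A-2p^2t\ge t^\alpha$ treated there.

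\textbf{Main obstacle.} The hardest step is the symmetrization and commutator errors in (b), in particular $[G(A),p^2]$ and $[\Phi_2,p^2]$. These generate terms $p^2\cdot(\text{bounded})\cdot p$ with only $t^{-\alpha}$ smallness. We would control them by Cauchy--Schwarz between the good term $\int\langle\Delta\psi,\cdot\,\Delta\psi\rangle dt/t$ and the $H^1$ bound (C.3a), absorbing a small multiple into the left side. If this absorption fails, the fallback is to exploit the analyticity of $F$ as in the Remark of the incoming section, which removes the symmetrization errors exactly.
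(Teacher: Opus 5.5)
Your architecture matches the paper's. It uses an $H^1$-level observable localized in $A/t^{\alpha}$, whose Heisenberg derivative $\Phi'(A/t^{\alpha})\bigl(2p^2t^{-\alpha}-\alpha A t^{-1-\alpha}\bigr)$ produces a $p^2\cdot p^2$ term. The paper uses $F_{A,M}H(t)F_{A,M}$ at a single dyadic scale $A\sim Mt^{\alpha}$ and sums over $M$ afterwards. However, the steps where the paper does real work are missing, and as written part (a) does not close.

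The negative term $-\alpha t^{-1-\alpha}p\,\Phi'(A/t^{\alpha})A\,p$ is not an $L^1(dt)$ error. Bounding it by $t^{-1}\langle p^2\rangle_t$ gives $\ln T$ under hypothesis (a), and $T^{2/5}$ with the large-$x$ proposition. The averaged bounds of the Away-from-PS theorem only concern $|2p^2t-A|\ge t^{\alpha}$, so they say nothing on the propagation set. The missing idea is the decomposition (C.5). On the PS and on $2p^2t-A\ge t^{\alpha}$ one has $2p^2-\alpha A/t\ge 2(1-\alpha)p^2-\alpha t^{\alpha-1}$. Since $\alpha<1$, the negative part is absorbed by the positive leading term once $p^2\gtrsim t^{\alpha-1}$. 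The only region where it cannot be absorbed is $A-2p^2t\ge t^{\alpha}$. There $p^2\lesssim A/t\sim Mt^{\alpha-1}$, so the $H$ factor is small and the error is $O(Mt^{\alpha-2})$ as in (C.6), which is integrable. Your part (b) alludes to ``$2p^2t-A\ge 0$'', but never handles $A-2p^2t\ge t^{\alpha}$, which is exactly where that inequality fails.

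The potential terms are also not integrable as you treat them. One factor $t^{-\alpha}$ from the commutator expansion gives $\int_1^T t^{-\alpha}\,dt\sim T^{1-\alpha}$. Your observable $p\Phi p$ also produces $i[V,p]\Phi p$, i.e. $\nabla V$ terms with no commutator gain at all. The paper avoids these by putting the full $H(t)$ between the cutoffs, so only $\partial_t V$ and $i[V,F_{A,M}]$ appear. It obtains integrability from $A\sim Mt^{\alpha}$ on the support: it inserts $\langle A\rangle^{-2}(A^2+1)$ on both sides and moves the $A^2$ onto the spatially localized $\partial_tV$ or $x\cdot\nabla V$. This is where the decay $\langle x\rangle^{-m}$, $m\ge 4+2\sigma$, is spent, and it yields $M^{-4}t^{-4\alpha}$, $(Mt^{\alpha})^{-3}$ and $t^{-2}(Mt^{\alpha})^{-1}$ in (C.8)--(C.9).

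Finally, with the weight you wrote, $\Phi(\lambda)\sim\int^{\lambda}d\mu/(\mu\ln\mu)\sim\ln\ln\lambda$ is unbounded, and the dyadic sum with weights $1/n$ behaves the same way. So $\langle B_1\rangle_t$ is not controlled by hypothesis (a). It would be bounded only under the paper's literal convention $\langle\mu\rangle=1+\mu^2$, which your reasoning does not use. The paper sidesteps this by estimating each single-scale $\langle F_{A,M}HF_{A,M}\rangle$ and truncating the dyadic sum at $M\lesssim cT^{1-\alpha}$, at the cost of a $\ln T$ on the left-hand side.
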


\begin{proof}

\begin{align*}
\partial_{t}\langle\psi(t), & \left.F_{A, M}\left(\frac{A}{M t^{\alpha}} \geq 1\right) H(t) F_{A, M}\left(\frac{A}{M t^{\alpha}} \geq 1\right) \psi(t)\right\rangle \\
= & \left\langle\frac{1}{M t^{\alpha}} \tilde{F}_{A, M}^{\prime}\left[2 p^{2}-\frac{\alpha A}{t}\right] H(t) F_{A, M}+c . c .\right\rangle \\
& +\left\langle\psi(t), F_{A, M} \frac{\partial V}{\partial t} F_{A, M} \psi(t)\right\rangle  \tag{C.4}\\
& +\left\langle\psi(t),\left(i\left[V, F_{A, M}\right] H(t) F_{A, M}+c . c .\right) \psi(t)\right\rangle \equiv I_{1}+I_{2}+I_{3}
\end{align*}

We proved in the previous section estimates away from the PS.

So, to deal with the first term, we write

\begin{align*}
& {\left[2 p^{2}-\frac{\alpha A}{t}\right]=\left[2 p^{2}-\frac{\alpha A}{t}\right] F_{A, C}^l\left(\frac{2 p^{2} t-A}{t^{\alpha}} \geqslant 1\right) } \\
+ & {\left[2 p^{2}-\frac{\alpha A}{t}\right] F_{A, C}^L\left(\frac{A-2 p^{2} t}{t^{\alpha}} \geqslant 1\right) } \\
+ & {\left[2 p^{2}-\frac{\alpha A}{t}\right] F_{A, C}^0\left(\frac{\left|A-2 p^{2} t\right|}{t^{\alpha}} \leq 1\right) } \tag{C.5}
\end{align*}

The first and third terms in equation  C.5 are positive (for $2p^2 t\geq t^{\alpha}$).
The second term is supported where $\frac{A}{t} \geqslant 2 p^{2}+t^{\alpha-1}$ and multiplied by $\tilde{F}_{A, M}^{'}$ where $\frac{A}{t^{\alpha}} \sim M$.

Therefore, $M t^{\alpha-1} \geqslant 2 p^{2}+t^{\alpha-1}$.

Consequently,

\begin{align*}
& \left\langle\tilde{F}_{A, M}^{\prime}\left[2 p^{2}-\frac{\alpha A}{t}\right] F_{A, C}^L\left(\frac{A-2 p^{2} t}{t^{\alpha}} \geqslant 1\right) H(t) F_{A, M}\right) \\
\lesssim & \left\langle\tilde{F}_{A, M}^{\prime} M^{2} t^{2 \alpha-2} F_{A, C}^L F_{A, M}\right\rangle  \tag{C.6}\\
& \text { since } F_{2}\left(p^{2} \leq M t^{\alpha-1}\right) H(t) \lesssim F_{2} M t^{\alpha-1}+F_{1} V
\end{align*}

and
$$
\begin{aligned}
\tilde{F}_{A, M}^{\prime} \cdot F_{1} V \sim & \tilde{F}_{A, M}^{\prime}\langle A\rangle^{-2} F_{1} A^{2} V \sim \\
& \left\langle M t^{\alpha}\right\rangle^{-2} M t^{\alpha-1}\left( |x|^{2} V\right) \sim M^{-1} t^{-1-\alpha}.
\end{aligned}
$$

So,

\begin{align*}
I_{1} &\geqslant\langle (Mt^{\alpha})^{-1}\tilde F'_{A,M}2p^4 F_{A,C}^lF_{A,M}\rangle-\left\langle\left(M t^{\alpha}\right)^{-1} \tilde F_{A, M}^{'} M^{2} t^{2 \alpha-2} F_{A, C}^l\left(\frac{A-2 p^{2} t}{t^{\alpha}} \geqslant 1\right) F_{A, M}\right\rangle \\
I_{2} & =\left\langle\psi(t), F_{A, M} \frac{\partial V}{\partial t} F_{A, M} \psi(t)\right\rangle  \tag{C.7}\\
& \approx\left\langle\psi(t), F_{A, M}\langle A\rangle^{-2}\left(A^{2}+1\right)\langle x\rangle^{-m}\left(A^{2}+1\right)\langle A\rangle^{-2} F_{A, M} \psi(t)\right\rangle  \tag{C.8}\\
& \approx\left\langle\Delta \psi(t),\langle x\rangle^{-\sigma} \tilde{F}_{A,M} t^{-4 \alpha}\langle x\rangle^{-m+4+2 \sigma} \tilde{F}_{A, M}\langle x\rangle^{-\sigma} \Delta \psi(t)\right\rangle M^{-4} \\
& \approx O\left(t^{-4 \alpha}\right)\|\Delta \psi(0)\|_{2}^{2} M^{-4},
\end{align*}

provided $m \geqslant 4+2 \sigma ; \quad \psi(0) \in H^{2}\left(\mathbb{R}^{3}\right)$.
\begin{remark}
The above estimate with $m\geqslant 4+2 \sigma$ is not necessary, and $\geqslant 2+2 \sigma$ is sufficient.
In the above we demonstrate the maximal decay possible at the $H^2$ level.
\end{remark}

The control of  $I_3$ is similar:
In this case, due to the $H(t)$ factor, we can only multiply by $\langle A\rangle^{2}$ and divide.
By restricting the domain to the PS or where
$A \geqslant 2 p^{2} t$, two powers of $\langle A\rangle ^{-1}$ give a decay of order $t^{-2}$, so, such regions are bounded by
$$
\|\psi(0)\|_{H^{2}}^{2} t^{-2}\left(M t^{2}\right)^{-1} .
$$

In the region $A \leq 2 p^{2} t$, the $\langle A\rangle^{-2}$ gives
$$
\|\psi(0)\|_{H^{2}}^{2}\left(M t^{\alpha}\right)^{-2}
$$

We also used the Commutator Expansion Lemma for
$$
\left[V, F_{A, M}\right] \sim x \cdot \nabla V F_{A, M}^{\prime}\left(M t^{\alpha}\right)^{-1}+\text { h.o.t. }
$$

So,

\begin{equation*}
I_{3} \lesssim\left[\left(M t^{\alpha}\right)^{-3}+t^{-2}\left(M t^{\alpha}\right)^{-1}\right]\|\psi(0)\|_{H^{2}}^{2} \tag{C.9}
\end{equation*}

Combining the equations C.4-C.9
we get

\begin{align*}
& \left\langle F_{A, M} H(t) F_{A, M}\right\rangle_{t=T}-\left\langle F_{A, M} H(t) F_{A, M}\right\rangle_{t=1} \\
& =\int_{1}^{T} \frac{d t}{t^{\alpha}}\left\langle\frac{1}{M} \tilde{F}_{A, M}^{\prime} 2 p^{2} F_{A, C}^{+} p^{2} \tilde{F}_{A, M}^{\prime}\right\rangle+h.o.t. \\
& +\int_{1}^{T} t^{2\alpha} \frac{d t}{t^{2} t^{\alpha}} \frac{M^{2}}{M}\left\langle\tilde{F}_{A, M}^{\prime} F_{2}\left(p^{2} \leq M t^{\alpha-1}\right) F_{A, C}^{-} F_{A, M}+c . c .\right\rangle \\
& +\int_{1}^{T} d t\left\{O\left(t^{-4 \alpha}M^{-4}\right)+\left(M t^{\alpha}\right)^{-3}+O\left(t^{-2}\right)\left(M t^{\alpha}\right)^{-1}\right\}^{2} \|\Delta \psi(0)\|_{2}^{2}. \tag{C.10}
\end{align*}

Here,
$$
F_{A, C}^{+}=F_{A, C}\left(\frac{\left|2 p^{2} t-A\right|}{t^{\alpha}} \leq 1\right)+F_{A, C}\left(\frac{2 p^{2} t-A}{t^{\alpha}} \geqslant 1\right)
$$

We use $\tilde{F}_{A, M}^{\prime}$ to denote generic function of the form $\widetilde{F}_{A, M}\left(\frac{A}{M t^{\alpha}}=1\right)$.

Next, we will use that
$$
\sum_{n=0}^{\infty}\left(M_{0} 2^{n}\right)^{l} F_{A, M_{0} 2^{n}} \sim\left(\frac{A}{t^{\alpha}}\right)^{l} \ln \left(\langle A\rangle / t^{\alpha}\right) F_{A}\left(\frac{A}{t^{\alpha}} \geqslant M_{0}\right)
$$
and
$$
\sum_{n=0}^{\infty}\left( M_02^{n}\right)^{l} / n F_{A, M_0 2^{n}} \sim\left(\frac{A}{t^{\alpha}}\right)^{l} F_{A}\left(\frac{A}{t^{\alpha}} \geq M_0\right)
$$

We sum over $M=M_{0} 2^{n}, n=0,1,2, \ldots$ both sides of eq. C.10, and sum over $n$ up to $M\sim cT^{1-\alpha}.$

{\bf The LHS becomes}
$$
\left\langle  F_{A}\left(cT^{1-\alpha}\geq\frac{A}{t^{\alpha}} \geq M_{0}\right) H(t) F_{A}\left(cT^{1-\alpha}\geq\frac{A}{t^{\alpha}} \geq M_{0}\right) \ln T\right\rangle_{t}-\left\langle  F_{A}\left(cT\geq A \geqslant M_{0}\right) \ln T \, H(1) F_{A}\left(cT\geq A\geq M_{0}\right)\right\rangle.
$$

{\bf The RHS}

The first term
$$
 \int_{1}^{T} d t\left\langle \langle A\rangle^{-1} F_{A}\left(cT^{1-\alpha}\geq \frac{A}{t^{\alpha}} \geqslant M_{0}\right) p^{2} F_{A,C}^{+} p^{2} F_{A}\left(cT^{1-\alpha}\geq\frac{A}{t^{\alpha}} \geqslant M_{0}\right)\right\rangle\sim
 $$
$$
\frac{1}{cT}\int_1^T \left \langle F_A p^2 F^+_{A,C} p^2 F_A\right \rangle d t.
$$

The second term
$$
\int_{1}^{T} \frac{d t}{t^{2}}\left\langle A F_{A}\left(cT^{1-\alpha}\geq\frac{A}{t^{\alpha}} \geqslant M_{0}\right) F_2 F_{A ,C}^{-} F_{A}\left(cT^{1-\alpha}\geq \frac{A}{t^{\alpha}} \geqslant 1\right)\right\rangle
$$

The third term
$$
C+\int_{1}^{T} \frac{d t}{t^{2+\alpha}}\|\psi(0)\|_{H^{2}}^{2}
$$

The LHS is uniformly bounded by $\ln T$ since $A\leq cT.$
We conclude that a sequence of times going to infinity exists such that the $H^2$ norm remains uniformly bounded on the phase space support of the localization of $A.$ This bound is then extended to all times, using as PROB $p^2G(A,t)p^2$ with $G$ localizing at the relevant domain in phase space. \end{proof}

\section*{High Frequency on the Propagation Set}

\begin{theorem}
We have the following estimate on the PS:

\begin{equation}
\left<H(T)\, F_c\,H(T)\right>_{T}\lesssim O(1)\,\|\psi_0\|_{L^2}\,\|\psi_0\|_{H^2}.
\end{equation}

\end{theorem}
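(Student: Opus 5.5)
We propose to mimic the proof of the Proposition controlling the PROB \eqref{HF}, now with the quadratic observable $B(t)=H(t)F_cH(t)$, where $F_c=F_c\bigl(\frac{|x-2pt|}{t^{\alpha}}\le 1\bigr)$ and $H(t)=p^2+V(t)$. Differentiating along the flow gives
\begin{align*}
\partial_t\langle H F_c H\rangle_t &= \langle \partial_t V\, F_c H + H F_c\, \partial_t V\rangle_t + \langle H\, (D_{H_0}F_c)\, H\rangle_t\\
&\quad + \langle H\, i[V,F_c]\, H\rangle_t .
\end{align*}
Here we use that $H$ commutes with itself, so only the explicit time derivative of $V$ and the Heisenberg derivative of $F_c$ appear.

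The middle term is handled exactly as before. Since $D_{H_0}|x-2pt|=0$, the only contribution is the explicit $s=t^{\alpha}$ dependence. This gives $-\frac{\alpha}{t}F_c'$ with a definite sign, up to symmetrization commutators of higher order in $t^{-\alpha}$, so it can be dropped or moved to the left-hand side.

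For the $\partial_t V$ term, we pull $\langle x\rangle^{-\sigma}$ out of $\partial_t V$. We then estimate the result by
\[
\|\langle x\rangle^{-\sigma}H\psi\|\,\|\langle x\rangle^{\sigma}\partial_tV\|_{L^\infty}\,\|\langle x\rangle^{-\sigma}F_c\psi\|,
\]
and treat the two factors separately:
\begin{itemize}
\item The first factor is bounded uniformly by Theorem \ref{Local1}, i.e. by $\|\psi_0\|_{H^2}$.
\item For the second factor, we use the Lemma in the previous proof. On $\operatorname{supp}F_c$ with $|p|\gtrsim 1$ we have $\langle x\rangle\gtrsim t$, so $\langle x\rangle^{-2}F_c\lesssim (1+4p^2t^2)^{-1}[1+s/\langle x\rangle+s^2/\langle x\rangle^2]F_c$. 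This yields decay like $t^{-2}\|\psi_0\|_{L^2}$, which is integrable.
\end{itemize}
This product structure, one factor in $H^2$ and one in $L^2$, is what produces the bound $\|\psi_0\|_{L^2}\|\psi_0\|_{H^2}$ rather than $\|\psi_0\|_{H^2}^2$. Low frequencies $|p|<1$ are trivial, since there $H$ is bounded.

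The potential commutator is expanded by the Commutator Expansion Lemma. Its leading term is $\nabla V\cdot\partial_pF_c\, t\, t^{-\alpha}$ plus similar terms, localized where $|x-2pt|\sim t^\alpha$ and simultaneously where $\langle x\rangle$ is bounded, due to the decay of $\nabla V$. On that set $|p|t\lesssim t^{\alpha}+\langle x\rangle$. Writing $p^2=(4p^2-x^2/t^2)/4+x^2/(4t^2)$ as in the previous proof, each $p$ gives a gain $\lesssim (s+\langle x\rangle)/t$. One $H$ factor is absorbed by the local $H^2$ bound of Theorem \ref{Local1}, and the other is converted into this gain acting on an $L^2$ factor. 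This gives a bound $O(t^{1-\alpha}\cdot t^{\alpha-1}\cdot t^{-1-\epsilon})$, which we need to be integrable.

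\textbf{Main obstacle.} The hard step is exactly this last one: making the symbol heuristic "$|p|\lesssim (t^\alpha+\langle x\rangle)/t$ on $\operatorname{supp}F_c\cap\{|x|\lesssim 1\}$" rigorous, with $H$ on both sides. We would first commute $\langle x\rangle^{-\sigma}$ through $F_c$ using $[x,F_c]=O(t^{\alpha})\tilde F_c'$, and the algebra of the Lemma in the preceding proof (the resolvent $(1+4p^2t^2)^{-1}$ trick). Higher order commutator remainders gain extra powers of $t^{-\alpha}$ and are integrable for $\sigma>4$.

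Integrating from $0$ to $T$ then yields the claimed bound, with the initial term controlled by $\langle H(0)F_cH(0)\rangle\lesssim\|\psi_0\|_{H^2}^2$. If the stated $L^2\cdot H^2$ form requires it, we would instead bound this initial term via the conserved $L^2$ norm and the initial $H^2$ norm.
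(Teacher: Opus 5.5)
Your proposal has two genuine gaps. \textbf{First, the Heisenberg-derivative term has the wrong sign.} With $\lambda=|x-2pt|/t^{\alpha}$ you have $D_{H_0}\lambda=-\frac{\alpha}{t}\lambda$, so $D_{H_0}F_c=-\frac{\alpha}{t}\lambda F_c'(\lambda)$. Since $F_c(\lambda\le 1)$ is decreasing, this is \emph{nonnegative}: the expanding window keeps capturing more of the solution. Hence $\langle H(D_{H_0}F_c)H\rangle_t$ is a source term and cannot be dropped in an upper bound. Moving it to the left only controls $\langle HF_cH\rangle_T$ minus a positive quantity. Bounding it would need an a priori $\int dt/t$ estimate of the $H^2$ mass near $|x-2pt|\sim t^{\alpha}$, which is what you are trying to prove. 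In the earlier Proposition this term was not ``handled by a sign''; it vanished because the width $s$ was frozen. The paper does the same here. It uses $F_c(|x-2pt|/R\le1)$ with $R$ constant, so $D_{H_0}F_c=0$ exactly. It then recovers the growing width by time slicing: $R=T^{2/5}$ on $[T^{4/5},T]$, iterated via $T_{n+1}=T_n^{4/5}$ for about $\ln\ln T$ steps, with $2/5$ replaced by $2/5-0$ so the contributions sum.

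\textbf{Second, the potential commutator cannot be controlled by the commutator expansion.} Since $[x_j,(x-2pt)_k]=-2it\,\delta_{jk}$, the expansion of $[V(x),f(|x-2pt|/R)]$ is in powers of $t/R=t^{1-\alpha}\gg1$. Higher-order terms therefore grow rather than gain powers of $t^{-\alpha}$; likewise $[x,F_c]=O(t^{1-\alpha})$, not $O(t^{\alpha})$. Even granting your symbol heuristic, the leading factor $t^{1-\alpha}$ times the gain $t^{2\alpha-2}$ from converting $p^2$ gives $t^{\alpha-1}$, which is not integrable; the extra $t^{-1-\epsilon}$ in your count has no source. The missing idea is a local-decay estimate for the free flow:
\begin{itemize}
\item write $F_c=e^{i\Delta t}F_1(|x|/R\le1)e^{-i\Delta t}$ and do not expand $i[V,F_c]$; treat $VF_c$ and $F_cV$ separately;
\item prove $\langle x\rangle^{-\sigma}e^{i\Delta t}F_2(|p|\ge M)F_1(|x|/R\le1)=O(\langle pt\rangle^{-2})R^2$, handling low frequencies by the $t^{-3/2}$ dispersive bound.
\end{itemize}
Then in $\langle p^2\psi, VF_c\,p^2\psi\rangle$ the $p^2$ adjacent to $F_c$ is absorbed by $\langle pt\rangle^{-2}$, leaving $t^{-2}R^2\|\psi\|_{L^2}$. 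The $p^2$ next to $V$ is controlled by Theorem~\ref{Local1}. This is the true source of the $L^2\cdot H^2$ structure. In your $\partial_tV$ estimate the factor $H\psi$ actually sits next to $F_c$, not next to $\langle x\rangle^{-\sigma}$, so your split needs exactly this mechanism. Note also that the provable decay is $t^{-2}R^2$, not $t^{-2}$. The Lemma you cite gives only $t^{-2}s^2$ unless you invoke the symbol-level claim $\langle x\rangle\gtrsim t$. This is why a frozen $R$ works only on intervals $[T_1,T]$ with $R^2\lesssim T_1$, and hence why the time slicing is needed.
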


\begin{proof}
To control the $H^2$ norm of the solution on the PS
we consider the following PROB:

\begin{equation}
B_{HH}
=\left<\psi(t),H(t)\,F_c\!\left(\frac{|x-2pt|}{R}\leq 1\right)H(t)\psi(t)\right>.
\end{equation}

Here $R$ is a large number that may depend on $t$.

Since
\begin{equation}
D_H F_c
=
\partial_t F_c
+
i[-\Delta,F_c]
=
-\frac{|x-2pt|}{R^2}\,\dot R\,F_c',
\end{equation}
it is zero if $\dot R =0$.

Hence, for $\dot R=0$, we get
\begin{equation}
\partial_t B_{HH}
=
\left<
\psi(t),
\left(
\frac{\partial V}{\partial t}
F_c\,H(t)
+
H(t)F_c\frac{\partial V}{\partial t}
\right)
\psi(t)
\right>
\end{equation}

\[
\qquad
+
\left<
\psi(t),
H(t)\,i[V,F_c]\,H(t)\psi(t)
\right>.
\]

For arbitrarily large $T$, we want to estimate
$B_{HH}(T)$ in the PS; for this, we need to take
$R$ at least of order $t^\alpha$, for some $\alpha<1$.

We have two types of terms to control:

\begin{equation}
\text{a)}\qquad
H(t)\,i[V,F_c]\,H(t)
=
p^2\,i[V,F_c]\,p^2
+
O(V)\,i[V,F_c]
\end{equation}

\begin{equation}
\text{b)}\qquad
\frac{\partial V}{\partial t}\,
F_c\,H(t)
\sim
\langle x\rangle^{-\sigma}
F_c\,p^2
+
O(V)\,
\frac{\partial V}{\partial t}\,
F_c.
\end{equation}

We concentrate on the type (a) that is more
difficult. The second part (b) is treated similarly.

The key estimate we use is the gain in
regularity and smoothness of the product
of $V$ and $\widetilde F_c$:

\begin{equation}
V(x,t)\, F_c=V(x,t)\,
\langle x\rangle^\sigma\langle x\rangle^{-\sigma}
e^{i\Delta t}F_2(|p|\geq M)F_1\!\left(\frac{|x|}{R}\leq 1\right)e^{-i\Delta t}F_2(|p|\ge M)
\end{equation}

Therefore, we estimate
\[\langle x\rangle^{-\sigma}e^{i\Delta t}F_2F_1\!\left(\frac{|x|}{R}\leq 1\right).\]

A first observation is

\begin{equation}
\langle x\rangle^{-\sigma}e^{i\Delta t}F_2
F_1\!\left(\frac{|x|}{R}\leq 1\right)=\langle x\rangle^{-\sigma}\langle x-2pt\rangle^{-2}e^{i\Delta t}\widetilde F_2R^{2}\widetilde F_1\!\left(\frac{|x|}{R}\leq 1\right)
\end{equation}

where

\begin{equation}
\langle x\rangle^2F_1\!\left(\frac{|x|}{R}\leq 1\right)=R^2\widetilde F_1\!\left(\frac{|x|}{R}\leq 1\right)
\end{equation}

and
\[\widetilde F_2 \approx F_2 +\mathcal{O}( \frac{1}{M}).\]

\begin{proposition}

For $\sigma>2$,

\begin{equation}
\langle x\rangle^{-\sigma}e^{i\Delta t}F_2F_1\!\left(\frac{|x|}{R}\leq 1\right)=O(\langle x\rangle^{-m})\langle pt\rangle^{-2}R^2O(1)\,\widetilde F_1,\qquad
m=\sigma-2.
\end{equation}
\end{proposition}
\begin{proof}
\begin{equation}
\langle x\rangle^{-2}\langle pt\rangle^{-2}\langle pt\rangle^{2}F_2\langle x-2pt\rangle^{-2}F_2=
\end{equation}

\[
\langle x\rangle^{-2}\langle pt\rangle^{-2}\Big(1+p^2t^2-t\,2x\!\cdot\! p-2pt\!\cdot\! x+x^2-x^2\Big)F_2\langle x-2pt\rangle^{-2}F_2=
\]

\[
\langle x\rangle^{-\sigma}\langle pt\rangle^{-2}\widetilde F_2+\langle x\rangle^{-\sigma}\langle pt\rangle^{-2}[-x^2-t\,2p\!\cdot\!x-t\,2x\!\cdot\!p]\widetilde F_2\langle x-2pt\rangle^{-2}F_2
\]

\begin{equation}
\langle x\rangle^{-2}\langle pt\rangle^{-2}(x^2-2x\!\cdot\!pt)=
\end{equation}

\[
=\frac{x^2}{2\langle x\rangle^2}\langle pt\rangle^{-2}-\langle x\rangle^{-2}\partial_p^2\langle pt\rangle^{-2}-\langle x\rangle^{-2}2x\langle pt\rangle^{-2}\cdot pt
\]

\[
\langle x\rangle^{-2}
\langle pt\rangle^{-2}
2pt\!\cdot\!x=
\langle x\rangle^{-2}x
\langle pt\rangle^{-2}2pt
-\langle x\rangle^{-1}O(1)\langle pt\rangle^{-2}2t
\]

Multiplying from the right by $\widetilde F_2$, we get

\begin{equation}
\langle x\rangle^{-2}\langle pt\rangle^{-2}(x^2-2x\!\cdot\!pt+2pt\!\cdot\!x)\widetilde F_2=
\langle pt\rangle^{-2}(1+O(1))\widetilde F_2
\end{equation}

\[
\qquad+\langle x\rangle^{-2}\langle pt\rangle^{-2}(1+O(1))
\frac{p^2t^2}{1+p^2t^2}\frac{1}{M^2}\widetilde F_2
\]

\[
\qquad+\langle x\rangle^{-2}\langle pt\rangle^{-2}2pt\left(\frac{\widetilde F'}{M}\right).
\]

Hence,

\begin{equation}
\langle x\rangle^{-2}
\widetilde F_2
\langle x+2pt\rangle^{-2}
\widetilde F_2
=
(1+O(1))
\langle pt\rangle^{-2}
\widetilde F_2
+\langle x\rangle^{-2}\langle pt\rangle^{-1}\left(\frac{\widetilde F'_2}{M}\right)\langle x-2pt\rangle^{-2}F_2
\end{equation}

Therefore,

\begin{equation}
\langle x\rangle^{-2}(1+O(pt)^{-1})\widetilde F_2\langle x-2pt\rangle^{-2}\widetilde F_2=(1+O(1))\langle pt\rangle^{-2}\widetilde F_2
\end{equation}

Therefore,
\[\left(M\gg1,\quad\langle x\rangle^{-2}\frac{O(pt)^{-1}}{M}\ll1\right)
\]

\begin{equation}
\langle x\rangle^{-2}\widetilde F_2\langle x-2pt\rangle^{-2}\widetilde F_2\approx\langle pt\rangle^{-2}\widetilde F_2
\end{equation}

Clearly, it also holds that

\begin{equation}
\langle x\rangle^{-2}\widetilde F_2\langle x-2pt\rangle^{-2}\widetilde F_2\lesssim\langle x\rangle^{-2}\widetilde F_2
\end{equation}
We also used that
\[
i[x,F_2]=-\frac{F_2'}{M},
\]
and therefore
\begin{equation}
[\langle x\rangle^{-\sigma},F_2]
\sim\langle x\rangle^{-\sigma-1}\left (\frac{F_2'}{M}+O\!\left(\frac{F_2'}{M^2}\right)\right).
\end{equation}

For $|p|\le \varepsilon$ we have:

\[
\langle x\rangle^{-\sigma}\langle pt\rangle^{-2}\langle pt\rangle^{2}\widetilde F_2\langle x-2pt\rangle^{-2}\widetilde F_2
\]

\[
\lesssim\langle pt\rangle^{-2}(1+\varepsilon^2 t^2)t^{-3/2}\widetilde F_2e^{-i\Delta t}
\]


\[
\lesssim C\,\varepsilon^2t^2t^{-3/2}\langle pt\rangle^{-2}=C\,\varepsilon^{2}t^{1/2}\langle pt\rangle^{-2}.
\]
as operators on
\[
L^2 \to L^2(\mathbb{R}^3)
\]
In the above we also used the pointwise time decay estimate $\|\langle x\rangle^{-\sigma} e^{i\Delta t}\langle x\rangle^{-\sigma}\|_{2\rightarrow 2} \leq ct^{-3/2}$ in three dimensions.
In higher dimensions, we get similar estimates,
also by using $\langle x\rangle^{-\sigma}$ instead of
$\langle x\rangle^{-2}$, $\sigma>2$.

In $4$-dimensions, we get with
$\sigma=2+\delta$ on the left, the bound

\[
C\,\varepsilon^2t^2t^{-2+\delta}\langle pt\rangle^{-2},
\]

since
\[
\langle x\rangle^{-2-\delta}\phi\in L^p\qquad\text{for all } p>1, \phi \in L^2.
\]
\end{proof}
\begin{corollary}

\begin{equation}
p^2\,i[V,F_2F_c F_2]\,p^2
=
p^2
\widetilde V
\langle x\rangle^{-\sigma}
\langle x\rangle^{-2} F_2 F_c  F_2 p^2+c.c.=
\end{equation}

\[
=p^2\langle x\rangle^{-\sigma}\widetilde V\langle pt\rangle^{-2}(1+O(1))\widetilde F_2p^2R^2+c.c.
\]

\[
=p^2\langle x\rangle^{-\sigma}\widetilde V\langle t\rangle^{-2}O(1)\widetilde F_2R^2+c.c.
\]

Consequently,

\begin{equation}
\int_{T_1}^{T}\left<p^2\,i[V,F_2F_c\widetilde F_2]\,p^2\right>dt\lesssim\langle T\rangle^{-1}\|\widetilde V\|_{\infty}\|\psi(0)\|_{H^2}\|\psi(0)\|_{L^2}R^2.
\end{equation}
Since
\[
R\sim T^\alpha,
\]
we need
\[
\alpha<\frac12.
\]

This gives control over the high frequency
part on the PS:
\[
F_2\,F_c\,\widetilde F_2.
\]
\end{corollary}

To cover the PS over the time interval $[0,T],$
we break the interval into sub-intervals; the first is
$(\delta \ll 1)$
\[
[\delta T,T]
\]
and we choose
\[
R=T^{2/5}.
\]

Then, we get

\begin{equation}
\left<H(T)\, F_c\,H(T)\right>_{T}-\left<H(T_1)\,F_c\,H(T_1)\right>_{T_1=T^{4/5}}=
\end{equation}

\[
=\int_{T_1}^{T}\partial_s\left<\psi(s),H(s)\,F_{c,R}\,H(s)\psi(s)\right>\lesssim T_1^{-1}R^2\|\psi\|_{H^2}\|\psi(0)\|_{L^2}
\]

\[
\lesssim O(1)\,\|\psi_0\|_{L^2}\,\|\psi_0\|_{H^2}.\]

Similarly,

\begin{equation}
\left<H(T_1)\, F_c \,H(T_1)\right>-\left<H(T_2)\,F_c\,H(T_2)\right>_{T_2=T_1^{4/5}}
\end{equation}

\[
\lesssim
O(1)\,\|\psi_0\|_{L^2}\,\|\psi_0\|_{H^2}.
\]

We have

\begin{equation}
T_n
=
T_{n-1}^{4/5}
=
(T_{n-2})^{\frac45\cdot\frac45}
=
\cdots
=
T_1^{(4/5)^{n-1}}
=
T^{(4/5)^n}
\end{equation}

Hence, for $T_n\sim1$, we need

\begin{equation}
\left(\frac45\right)^n\ln T
=
O(1),
\end{equation}

or

\begin{equation}
\ln\ln T
\sim
n(\ln 5/4).
\end{equation}

In order to remove the growth of $\ln\ln T$, it is sufficient to replace the factor $2/5$ by $2/5-0$ in the definition of $F_c.$
Then each interval contribution will be suppressed by a factor $ T_n^{\epsilon}, $ so the sum over all contributions is finite, since $T_n$ converges to 1 super exponentially fast, and the number of terms is $c\ln\ln T.$
The resulting estimate of the norm $H^2$ completes the part of the phase-space left from the previous section.
\end{proof}

\section{ Final Comments}

There are problems where higher regularity cannot be achieved without proving Local decay estimates and other $L^p$ decay estimates.
The approach introduced in this work does not use such methods.

Some simple examples of applications include potentials that correspond to modulated soliton solutions of NLS type equations.
The standard example is the soliton part of the dynamics in the modulation equations that couple the changing Soliton to the Radiation.
In this case the potential is essentially a smooth localized function with width and amplitude varying in an unknown way in time.

The general case of time periodic, quasi periodic and some almost periodic space-localized potentials is covered.

Also included Saturated Nonlinear Equations, where the nonlinear term is of the general form
$$
I(x,t)\frac{|\psi|^2}{1+V(x,t)|\psi|^2}
$$
with smooth localized function $I$, and smooth localized plus a constant function $V.$

More intricate is the case of semilinear equations. First, it should be observed that the Bootstrap step on the time interval $t-\epsilon,t]$ in the proof of Theorem 2.2, can be extended to the nonlinear case.  Observe that the a nonlinear bootstrap estimate is sufficient,by simply choosing the size of the time interval, $\epsilon.$ small compared with a power of the size of the $H^2$ norm of the Initial Data. This shows that the solution is in $H^2$ locally in space and uniform in time. Localization of the interaction term for the next steps can be obtained from the assumption of spherical symmetry.
{\bf Acknowledgment }

\thanks{
A.S. is supported in part by NSF grant DMS-2205931}

\thanks{soffer@math.rutgers.edu}

\bibliographystyle{plainnat}
\bibliography{bib}{}

\end{document}